\documentclass[11pt]{amsart}

\usepackage[T1]{fontenc}
\usepackage{lmodern}
\usepackage{microtype}
\usepackage{mathtools}
\usepackage{amssymb}
\usepackage{enumitem}
\usepackage[colorlinks=true,linkcolor=blue,citecolor=blue,urlcolor=blue]{hyperref}
\hypersetup{pdftitle={Regularity for Elliptic Equations with Coefficients of Small Mean Oscillation},pdfauthor={Luigi D'Onofrio}}

\newcommand{\R}{\mathbb R}
\newcommand{\fint}{\mathop{\rlap{\raisebox{0.25ex}{\scriptsize--}}\!\int}}
\newcommand{\BMO}{\mathrm{BMO}}
\newcommand{\VMO}{\mathrm{VMO}}
\newcommand{\supp}{\operatorname{supp}}
\newcommand{\dist}{\operatorname{dist}}
\newcommand{\dd}{\,\mathrm d}
\newcommand{\cM}{\mathcal M}

\newcommand{\Sym}{\mathbb S}

\theoremstyle{plain}
\newtheorem{theorem}{Theorem}[section]
\newtheorem{lemma}[theorem]{Lemma}
\newtheorem{proposition}[theorem]{Proposition}
\newtheorem{corollary}[theorem]{Corollary}
\theoremstyle{remark}
\newtheorem{remark}[theorem]{Remark}

\title[Elliptic equations with small mean oscillation]
{Regularity for Elliptic Equations with Coefficients of Small Mean Oscillation}
\author{Luigi D'Onofrio}
\address{Department of Science and Technology, University of Naples ``Parthenope'',
Centro Direzionale di Napoli, Isola C4, 80143 Napoli, Italy}
\email{luigi.donofrio@uniparthenope.it}
\subjclass[2020]{Primary 35J15; Secondary 35B45, 42B25}
\keywords{nondivergence elliptic equations, BMO coefficients, VMO coefficients, sharp functions, interior $W^{2,p}$ estimates}
\date{}

\begin{document}

\begin{abstract}
We give a detailed proof of interior $W^{2,p}$ regularity for uniformly elliptic equations in nondivergence form
\[
 a^{ij}(x)D_{ij}u+b^i(x)D_i u=f.
\]
The leading matrix is assumed to have sufficiently small mean oscillation on the balls under consideration.  The proof uses a modern real-variable argument: a constant-coefficient harmonic replacement yields a sharp-function estimate for the Hessian, and the Fefferman--Stein and Hardy--Littlewood theorems permit the coefficient error to be absorbed.  A parameter estimate, obtained by Agmon's auxiliary-variable argument, supplies a consistent resolvent on all $L^p$ spaces and makes the subsequent gain of integrability non-circular.  The first-order term is retained throughout and is controlled by the scale-invariant quantity $R^{1-n/q}\|b\|_{L^q(B_R)}$, with $q>\max\{n,p\}$.  As a consequence, coefficients in $\VMO_{\rm loc}$ give the usual local $W^{2,p}$ regularity for every finite $p$.
\end{abstract}

\maketitle

\section{Introduction and main results}
Let $\Omega\subset\R^n$, $n\ge2$, be open.  We consider the linear equation
\begin{equation}\label{eq:main}
 Lu:=a^{ij}(x)D_{ij}u+b^i(x)D_i u=f
 \qquad\text{in }\Omega,
\end{equation}
where repeated indices are summed.  For matrices $M=(m^{ij})$ and $N=(n_{ij})$ we use the Frobenius contraction
\[
 M:N:=\sum_{i,j=1}^n m^{ij}n_{ij}.
\]
Thus $A:D^2u=a^{ij}D_{ij}u$.  The matrix $A=(a^{ij})$ is real, symmetric and uniformly elliptic:
\begin{equation}\label{eq:ellipticity}
 \lambda|\xi|^2\le a^{ij}(x)\xi_i\xi_j\le\Lambda|\xi|^2
 \qquad\text{for a.e. }x\in\Omega,
\end{equation}
with $0<\lambda\le\Lambda<\infty$.

For an integrable function $g$ on a ball $B$, write
\[
 g_B:=\fint_B g(x)\,\dd x,
 \qquad
 [g]_{\BMO(B)}:=\sup_{B'\subset B}\fint_{B'}|g-g_{B'}|\,\dd x,
\]
where the supremum is taken over all balls $B'\subset B$.  For matrix-valued functions we use the Euclidean norm on the space of matrices.  If $U\Subset\Omega$, define
\begin{equation}\label{eq:omegaA}
 \omega_A(r;U):=
 \sup_{x\in U}\ \sup_{\substack{0<\rho\le r\\ B_\rho(x)\Subset\Omega}}
 \fint_{B_\rho(x)}|A-A_{B_\rho(x)}|\,\dd x.
\end{equation}
Thus $A\in\VMO_{\rm loc}(\Omega)$ when $\omega_A(r;U)\to0$ as $r\downarrow0$ for every $U\Subset\Omega$.  For the vector field $b$ we analogously set
\begin{equation}\label{eq:omegab}
 \omega_b(r;U):=
 \sup_{x\in U}\ \sup_{\substack{0<\rho\le r\\ B_\rho(x)\Subset\Omega}}
 \fint_{B_\rho(x)}|b-b_{B_\rho(x)}|\,\dd x.
\end{equation}
Accordingly, $b\in\VMO_{\rm loc}(\Omega;\R^n)$ means that
$\omega_b(r;U)\to0$ as $r\downarrow0$ for every $U\Subset\Omega$.

For the absorption of the first-order term we need a different, full-norm quantity.  For $q>n$ set
\begin{equation}\label{eq:beta}
 \beta_{b,q}(r;U):=
 \sup_{x\in U}\ \sup_{\substack{0<\rho\le r\\ B_\rho(x)\Subset\Omega}}
 \rho^{1-n/q}\|b\|_{L^q(B_\rho(x))}.
\end{equation}
If $b\in L^q_{\rm loc}(\Omega)$, then $\beta_{b,q}(r;U)\to0$ as $r\downarrow0$.  Indeed, this follows from the absolute continuity of the integral, uniformly over sets of sufficiently small measure.  In particular, the conclusion holds for every finite $q$ when $b\in\BMO_{\rm loc}(\Omega)$, by the John--Nirenberg inequality.  The full $L^q$ norm in \eqref{eq:beta} is essential: the BMO seminorm alone does not detect a large constant drift.

The interior estimates of Chiarenza, Frasca and Longo \cite{CFL91,CFL93} are classically proved through singular-integral commutators.  Here we use instead the sharp-function method developed in modern $L^p$ theory; see, for example, \cite{Krylov,SteinHA}.  The lower-order term is treated in the spirit of \cite{Vitanza}.

Our local estimate is as follows.

\begin{theorem}[Local $W^{2,p}$ estimate]\label{thm:local}
Let $1<p<\infty$ and choose $q>\max\{n,p\}$.  There exist
\[
 \delta=\delta(n,p,q,\lambda,\Lambda)>0,
 \qquad C=C(n,p,q,\lambda,\Lambda)>0,
\]
such that the following statement holds.  Let $B_{2R}=B_{2R}(x_0)\Subset\Omega$ and assume
\begin{equation}\label{eq:smallness}
 [A]_{\BMO(B_{2R})}\le\delta,
 \qquad
 (2R)^{1-n/q}\|b\|_{L^q(B_{2R})}\le\delta.
\end{equation}
If $u\in W^{2,p}(B_{2R})$ is a strong solution of \eqref{eq:main}, with $f\in L^p(B_{2R})$, then
\begin{equation}\label{eq:localestimate}
 \|D^2u\|_{L^p(B_R)}
 \le C\left(
 \|f\|_{L^p(B_{2R})}+R^{-2}\|u\|_{L^p(B_{2R})}
 \right).
\end{equation}
\end{theorem}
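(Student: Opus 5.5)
We follow the real-variable route announced in the introduction: a sharp-function estimate for the Hessian of compactly supported functions, then a localization by cutoffs. We work with $v=\eta u$, where $\eta\in C_c^\infty(B_{r_2})$, $\eta\equiv1$ on $B_{r_1}$, $R\le r_1<r_2\le2R$, and $|D^k\eta|\lesssim (r_2-r_1)^{-k}$. Then $v\in W^{2,p}_0(B_{2R})$ and
\[
 A:D^2v=\eta f-\eta\, b\cdot Du+2A:(D\eta\otimes Du)+u\,A:D^2\eta=:g .
\]
We extend $A$ to all of $\R^n$ (reflecting or freezing outside $B_{2R}$) so that $[A]_{\BMO(\R^n)}\lesssim\delta$. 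Ellipticity is preserved by convexity.

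\textbf{Step 1: sharp-function estimate.} Fix a ball $B=B_r(y)$ and set $\bar A=A_{B_{2r}(y)}$. Write $v=h+w$ on $B_{2r}$, where $\bar A:D^2w=g+(\bar A-A):D^2v$ with $w=0$ on $\partial B_{2r}$ (solved in $W^{2,s}$, $1<s<p$, by the constant-coefficient Calderón--Zygmund theory), and $\bar A:D^2h=0$. Interior estimates for $h$, which are derivatives of harmonic functions after a linear change of variables, give for $\kappa$ small
\[
 \fint_{B_{\kappa r}}|D^2h-(D^2h)_{B_{\kappa r}}|\lesssim\kappa\Bigl(\fint_{B_{2r}}|D^2h|^s\Bigr)^{1/s}.
\]
For $w$ we use Hölder and John--Nirenberg on $|A-\bar A|$:
\[
 \Bigl(\fint_{B_{2r}}|D^2w|^s\Bigr)^{1/s}\lesssim \bigl(\cM(|g|^s)\bigr)^{1/s}+\delta^{1/s-1/t}\bigl(\cM(|D^2v|^t)\bigr)^{1/t}
\]
with $s<t<p$. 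Combining the two gives
\[
 (D^2v)^\#\le C\kappa\,\cM(|D^2v|^s)^{1/s}+C_\kappa\bigl[\cM(|g|^s)^{1/s}+\delta^{\theta}\cM(|D^2v|^t)^{1/t}\bigr].
\]
By Fefferman--Stein and Hardy--Littlewood in $L^p$ (using $t<p$), choosing first $\kappa$ and then $\delta$ small, we obtain $\|D^2v\|_p\le C\|g\|_p$. Absorption is legitimate because $D^2v\in L^p$ is already known, since $u\in W^{2,p}$. So we need no parameter/resolvent argument here; that is only needed for gaining integrability later.

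\textbf{Step 2: lower-order terms.} First, $\|\eta f\|_p\le\|f\|_{L^p(B_{2R})}$. Next,
\[
 \|\eta\,b\cdot Du\|_p\le\|b\|_{L^q(B_{r_2})}\|Du\|_{L^{pq/(q-p)}(B_{r_2})}.
\]
Since $q>n$, the exponent $p^*=pq/(q-p)$ satisfies $1/p^*=1/p-1/q>1/p-1/n$. Gagliardo--Nirenberg and Sobolev, in scale-invariant form on $B_{r_2}$, give
\[
 \|Du\|_{L^{p^*}}\lesssim r_2^{\,n/q-1}\|D^2u\|_{L^p(B_{r_2})}+r_2^{\,n/q-1}\cdot r_2^{-1}\cdots,
\]
which is really a lower-order $u$ term; here we use $q>p$. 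Hence this term is at most
\[
 C\delta\|D^2u\|_{L^p(B_{r_2})}+C\delta R^{-2}\|u\|_{L^p}.
\]
The term with $D\eta\otimes Du$ is bounded by $(r_2-r_1)^{-1}\|Du\|_{L^p(B_{r_2})}$, and the interpolation
\[
 \|Du\|\le\varepsilon(r_2-r_1)\|D^2u\|+C_\varepsilon(r_2-r_1)^{-1}\|u\|
\]
controls it.

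\textbf{Step 3: iteration.} Collecting the bounds gives
\[
 \|D^2u\|_{L^p(B_{r_1})}\le\bigl(C\delta+\varepsilon\bigr)\|D^2u\|_{L^p(B_{r_2})}+C\|f\|_p+C(r_2-r_1)^{-2}\|u\|_{L^p(B_{2R})}.
\]
With $C\delta+\varepsilon\le1/2$, the standard iteration lemma over $R\le r_1<r_2\le2R$ yields \eqref{eq:localestimate}. The main obstacle I expect is Step 1: controlling the oscillation term uniformly and keeping the extension of $A$ with a small BMO norm. The key is the choice $s<t<p$, so that the maximal operators stay bounded on $L^p$.
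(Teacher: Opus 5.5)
Your proposal is correct and follows essentially the paper's route. The steps match: the harmonic-replacement sharp-function bound with Fefferman--Stein absorption for the compactly supported $v=\eta u$ (Lemma~\ref{lem:oscillation}, Theorem~\ref{thm:global-principal}, Proposition~\ref{prop:compact-principal}), then interpolation and hole-filling. The one structural difference is in the drift. You keep all of $\eta\,b\cdot Du$ on the right-hand side and absorb the resulting $C\delta\|D^2u\|_{L^p(B_{r_2})}$ in the hole-filling, using the non-compact scaled Sobolev inequality for $Du$. The paper instead absorbs $b\cdot Dv$ already in Corollary~\ref{cor:compact-full}, via compact-support Sobolev, so only $u\,b\cdot D\eta$ enters the iteration. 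Both work.

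Two corrections are needed.

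First, ``freezing'' $A$ to a constant outside $B_{2R}$ does not give $[\widetilde A]_{\BMO(\R^n)}\lesssim\delta$. Take $A=\bigl(1+\min\{\delta\log^+(R/|x-x^*|),1\}\bigr)I$ with $x^*\in\partial B_{2R}$. Its oscillation on $B_{2R}$ is $O(\delta)$, but $A=2I$ near $x^*$ while $A_{B_{2R}}=(1+O(\delta))I$. Freezing therefore creates a jump of size about $1$ across the sphere. You must instead use reflection in the sphere, which works but requires the usual chain/Whitney verification. Alternatively, use Jones's extension followed by projection onto $\{\lambda I\le M\le\Lambda I\}$, as in Lemma~\ref{lem:extension}.

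Second, the scaled Sobolev factor in Step 2 is $r_2^{1-n/q}$, not $r_2^{n/q-1}$. The lower-order piece, after interpolation, is $r_2^{-1-n/q}\|u\|_{L^p(B_{r_2})}$. Only with these exponents do $\|b\|_{L^q(B_{2R})}\le\delta(2R)^{n/q-1}$ and $R\le r_2\le 2R$ yield your stated bound $C\delta\|D^2u\|_{L^p(B_{r_2})}+C\delta R^{-2}\|u\|_{L^p}$.
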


The coefficient to be absorbed in the drift term is therefore $R^{1-n/q}\|b\|_{L^q}$, which becomes small at small scales because $q>n$.  No negative power of $R$ occurs.

The final regularity statement is the following.

\begin{theorem}[VMO regularity]\label{thm:VMO-intro}
Assume that $A\in\VMO_{\rm loc}(\Omega)$ satisfies \eqref{eq:ellipticity} and that $b\in\VMO_{\rm loc}(\Omega;\R^n)$.  Let $u\in W^{2,2}_{\rm loc}(\Omega)$ be a strong solution of \eqref{eq:main}, with $f\in L^p_{\rm loc}(\Omega)$ for some $1<p<\infty$.  Then $u\in W^{2,p}_{\rm loc}(\Omega)$.
\end{theorem}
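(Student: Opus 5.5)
Since regularity is local, we fix $x_0\in\Omega$ and a ball $B_{2R}(x_0)\Subset\Omega$ with $R$ small, and show $u\in W^{2,p}(B_{R/2}(x_0))$. We split into two cases. If $p\le 2$, there is nothing to prove beyond the hypothesis: $u\in W^{2,2}_{\rm loc}\subset W^{2,p}_{\rm loc}$. For $p>2$ we raise integrability. Theorem~\ref{thm:local} is only an \emph{a priori} estimate, since it presupposes $u\in W^{2,p}$, so using it directly would be circular. Instead we use the solvability supplied by the parameter (Agmon) estimate announced in the abstract, together with a uniqueness argument at the $W^{2,2}$ level.

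\textbf{Reduction to small coefficients.} Fix $q>\max\{n,p\}$ and take $\delta=\delta(n,p,q,\lambda,\Lambda)$ from Theorem~\ref{thm:local}; shrink it if necessary so that it also serves the exponent $2$ and every intermediate exponent used below. Since $A\in\VMO_{\rm loc}$, we have $[A]_{\BMO(B_{2R})}\le\omega_A(4R;U)\le\delta$ for small $R$. Since $b\in\VMO_{\rm loc}$, $b$ lies in $\BMO_{\rm loc}$, hence in $L^q_{\rm loc}$ by John--Nirenberg, so $\beta_{b,q}(2R;U)\le\delta$ for small $R$. Thus both smallness conditions \eqref{eq:smallness} hold on $B_{2R}$.

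\textbf{Localization and bootstrap.} Take a cutoff $\eta\in C_c^\infty(B_{2R})$ with $\eta=1$ on $B_R$, and set $v=\eta u$. Then
\[
a^{ij}D_{ij}v+b^iD_iv=\eta f+2a^{ij}D_i\eta D_ju+u\,a^{ij}D_{ij}\eta+u\,b^iD_i\eta=:g .
\]
By Sobolev embedding, $u\in W^{2,2}$ gives $Du\in L^{2^*}$ and $u\in L^{2^{**}}$, so $g\in L^{p_1}$ with $p_1=\min\{p,2^*,\dots\}$. The term $u\,b$ lies in $L^{s}$ with $1/s=1/q+1/2^{**}$, and since $q>n$ this exponent still exceeds $2$ by a fixed amount. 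Next we extend $A$ and $b$ outside $B_{2R}$: extend $A$ by even reflection, or by $\lambda$-preserving interpolation to a constant, and cut off $b$. This keeps small BMO and small scale-invariant $L^q$ norm at scales up to $R$. On a large ball, or on $\R^n$ with a zeroth-order parameter term $-\mu v$, the parameter estimate gives a solution $w\in W^{2,p_1}\cap W^{2,2}$ of the same equation with right-hand side $g-\mu v$. Here $\mu v\in L^{p_1}$ by embedding, after possibly iterating.

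\textbf{Uniqueness (the main obstacle).} The $W^{2,2}$ a priori estimate, obtained from Theorem~\ref{thm:local} with $p=2$ in its global parameter form, forces $w=v$. The difficulty is ensuring that uniqueness holds in $W^{2,2}$ and that the resolvent is consistent, so that the $W^{2,p_1}$ solution coincides with the $W^{2,2}$ one. We handle this by using that the parameter-dependent operator is invertible simultaneously on $L^2$ and $L^{p_1}$, with inverses agreeing on $L^2\cap L^{p_1}$; this is obtained by approximating $A$ and $b$ with smooth coefficients and passing to the limit. Hence $u\in W^{2,p_1}(B_R)$. Finitely many such steps, each gaining a fixed Sobolev increment and shrinking the ball, reach $W^{2,p}(B_{R/2})$. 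Theorem~\ref{thm:local} then also yields the quantitative estimate.
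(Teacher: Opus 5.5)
Your outline is correct and has the same architecture as the paper's proof: the trivial case $p\le2$, the cutoff $v=\eta u$, a global resolvent from Agmon's parameter estimate, consistency of resolvents across exponents to avoid circularity, and finitely many Sobolev steps under finitely many BMO thresholds. Your reflection/interpolation extension can be replaced by Lemma~\ref{lem:extension}. Two sub-steps, however, go differently.

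\textbf{Placement of the drift.} You keep the drift in the operator you invert, so you need a resolvent for $A:D^2+b\cdot D-\mu$ on $\R^n$. The paper instead moves $-\eta_k b\cdot Du$ to the right-hand side, where it already lies in $L^t$ with $1/t=1/r_k-1/n+1/q$ from the previous step. It therefore only ever inverts the principal operator $\widetilde A:D^2-1$ (Lemma~\ref{lem:lifting}). The price is a gain of only $1/n-1/q$ per step, instead of your $1/n$.

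If you keep $b$ on the left, smallness of $\rho^{1-n/q}\|b\|_{L^q(B_\rho)}$ at scales $\rho\le2R$ does not by itself give a global estimate. With $b$ cut off to $B_{2R}$ you get $\|b\cdot Dw\|_{L^p}\le C\delta(\|D^2w\|_{L^p}+R^{-1}\|Dw\|_{L^p})$. Estimate \eqref{eq:parameter} makes this absorbable only if you fix $\mu\gtrsim R^{-2}$; your sketch leaves $\mu$ unspecified.

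\textbf{Proof of consistency.} You obtain consistency by smooth approximation rather than by the continuation-plus-Neumann-series argument of Theorem~\ref{thm:resolvent}. This works. Mollifying the extended matrix keeps it in $\{\lambda I\le M\le\Lambda I\}$ and at most doubles its BMO seminorm, so the parameter estimates hold uniformly. The approximate solutions of $(L_\varepsilon-\mu)w_\varepsilon=g-\mu v$ are then bounded in $W^{2,2}\cap W^{2,p_1}$. Their weak limit solves the limit equation, and $W^{2,2}$ uniqueness identifies it with $v$.

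The advantage of your route is that it needs only a priori estimates, not solvability, for the rough operator. Its cost is that it imports the classical regularity lifting for continuous coefficients (Gilbarg--Trudinger, Chapter~9) to know that the approximate solutions lie in $W^{2,p_1}$. The paper's argument avoids that and is self-contained.

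\textbf{Source of uniqueness.} The $W^{2,2}$ uniqueness must come from the global estimate \eqref{eq:parameter}, together with the drift absorption above. It cannot come from Theorem~\ref{thm:local}: that estimate is local and carries the term $R^{-2}\|u\|_{L^p(B_{2R})}$, so it cannot force $w=v$.
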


The proof of Theorem~\ref{thm:VMO-intro} is given in Section~\ref{sec:bootstrap}.  Its key point is that the target $W^{2,p}$ regularity is not assumed in order to apply the estimate: a consistent resolvent provides a genuine finite gain of integrability.

\section{The John--Nirenberg inequality}
The exponential decay is obtained by a recursive stopping-time construction.  We state the result on a cube; the ball formulation follows by the standard comparison of balls and cubes.

For a cube $Q$, let
\[
 [g]_{\BMO(Q)}:=\sup_{Q'\subset Q}\fint_{Q'}|g-g_{Q'}|\,\dd x,
\]
where $Q'$ ranges over subcubes with sides parallel to those of $Q$.

\begin{theorem}[John--Nirenberg]\label{thm:JN}
There exist dimensional constants $c_1,c_2>0$ such that, for every cube $Q$, every $g\in\BMO(Q)$ and every $t>0$,
\begin{equation}\label{eq:JNtail}
 \bigl|\{x\in Q:|g(x)-g_Q|>t\}\bigr|
 \le c_1|Q|\exp\!\left(-c_2\frac{t}{[g]_{\BMO(Q)}}\right).
\end{equation}
\end{theorem}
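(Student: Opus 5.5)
By homogeneity we may normalize $[g]_{\BMO(Q)}=1$; the case $[g]_{\BMO(Q)}=0$ is trivial, since then $g$ is a.e.\ constant. Define the distribution function
\[
 F(t):=\sup_{Q'}\frac{|\{x\in Q':|g(x)-g_{Q'}|>t\}|}{|Q'|},
\]
where $Q'$ ranges over all subcubes of $Q$ with $[g]_{\BMO(Q')}\le1$; every subcube qualifies, since the seminorm is monotone in the cube. Chebyshev gives $F(t)\le 1/t$. The plan is to prove a recursive inequality $F(t+s)\le \frac{1}{s}\,F(t)$ for a suitable fixed $s$ and all $t\ge0$ (up to a dimensional shift), then iterate it to obtain exponential decay.

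The key step is a Calder\'on--Zygmund stopping-time decomposition. Fix a subcube $Q'$ and a level $\alpha>1$, say $\alpha=2$. Apply the dyadic decomposition to $|g-g_{Q'}|$ on $Q'$ at height $\alpha$. Since $\fint_{Q'}|g-g_{Q'}|\le1<\alpha$, we obtain disjoint maximal dyadic subcubes $Q_j$ with
\[
 \alpha<\fint_{Q_j}|g-g_{Q'}|\le2^n\alpha ,
\]
$|g-g_{Q'}|\le\alpha$ a.e.\ off $\bigcup Q_j$, and $\sum|Q_j|\le\alpha^{-1}\int_{Q'}|g-g_{Q'}|\le|Q'|/\alpha$. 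The upper bound comes from maximality, using the parent cube. It follows that $|g_{Q_j}-g_{Q'}|\le2^n\alpha$. Hence, for $t\ge0$,
\[
 \{x\in Q':|g-g_{Q'}|>t+2^n\alpha\}\subset\bigcup_j\{x\in Q_j:|g-g_{Q_j}|>t\},
\]
and therefore
\[
 |\{|g-g_{Q'}|>t+2^n\alpha\}|\le\sum_jF(t)|Q_j|\le \frac{|Q'|}{\alpha}F(t).
\]
Taking the supremum over $Q'$ yields $F(t+2^n\alpha)\le\alpha^{-1}F(t)$. With $\alpha=2$, iterating $k$ times from $F(0)\le1$ gives $F(2^{n+1}k)\le 2^{-k}$. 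Since $F$ is monotone, for $t\in[2^{n+1}k,2^{n+1}(k+1))$ we get $F(t)\le 2\exp(-t\log2/2^{n+1})$. This gives the claim with $c_1=2$ and $c_2=2^{-n-1}\log2$. Rescaling $g\mapsto g/[g]_{\BMO(Q)}$ recovers \eqref{eq:JNtail}.

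The main obstacle is making the recursion legitimate. The supremum defining $F$ must be finite, which holds because $F\le1$. It must also range over a class closed under passing to the stopping cubes $Q_j$, which holds because dyadic subcubes of $Q'$ are subcubes of $Q$ with parallel sides. We also need $g\in L^1(Q)$ so that the decomposition applies; this is implicit in $g\in\BMO(Q)$. Beyond that, the only care needed is the maximality bound $\fint_{Q_j}|g-g_{Q'}|\le2^n\alpha$ and the a.e.\ bound off the cubes, which follows from the Lebesgue differentiation theorem.
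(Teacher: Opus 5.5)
Your proof is correct, and it rests on the same Calder\'on--Zygmund stopping at height $2$ that the paper uses. In both, the selected cubes cover at most half of the parent cube, $|g-g_{Q'}|\le2$ a.e.\ off them, and maximality through the dyadic parent gives the jump bound $|g_{Q_j}-g_{Q'}|\le2^{n+1}$.

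Where you differ is in how the iteration is organized. The paper builds the whole tree of stopping generations explicitly and proves $|E_k|\le2^{-k}|Q|$ generation by generation. It then telescopes the averages along the nested chain $Q=P_0\supset\cdots\supset P_{k-1}$ to get the pointwise bound $|g-g_Q|\le2+(k-1)2^{n+1}$ off $E_k$.

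You instead encode the self-similarity in the scale-invariant supremum $F(t)$ over all subcubes. A single stopping step then yields $F(t+2^{n+1})\le\frac12F(t)$. The iteration reduces to iterating this inequality in $t$, so you never track chains of cubes or telescope averages. The small-$t$ range is covered by the same iteration through $F(0)\le1$, and you arrive at the same constants $c_1=2$, $c_2=2^{-n-1}\log2$. The only extra check your route needs is that the class of cubes in the supremum is closed under passing to stopping cubes, and you verify this.

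One cosmetic point: your announced plan $F(t+s)\le s^{-1}F(t)$ does not match what you actually prove, $F(t+2^n\alpha)\le\alpha^{-1}F(t)$. The latter is the correct form. Also, the Chebyshev bound $F(t)\le1/t$ is never used.
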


\begin{proof}
By homogeneity assume $[g]_{\BMO(Q)}=1$.  The function is temporarily restricted to the fixed cube $Q$: every stopping cube and every parent used below is a dyadic subcube of $Q$.  Equivalently, one may extend $(g-g_Q)\chi_Q$ by zero, provided that the stopping construction is still performed only inside $Q$.

We use the stopping height $2$.  Select the maximal dyadic subcubes
$\{Q_j^{(1)}\}$ of $Q$ for which
\[
 \fint_{Q_j^{(1)}}|g-g_Q|\,\dd x>2.
\]
The cube $Q$ itself is not selected, since
$\fint_Q|g-g_Q|\le1$.  The selected cubes are pairwise disjoint and
\begin{equation}\label{eq:gen1measure}
 2\sum_j|Q_j^{(1)}|
 <\sum_j\int_{Q_j^{(1)}}|g-g_Q|\,\dd x
 \le\int_Q|g-g_Q|\,\dd x
 \le |Q|.
\end{equation}
Thus their union occupies at most one half of $Q$.  Outside that union,
dyadic differentiation gives $|g-g_Q|\le2$ a.e.  If
$\widehat Q_j^{(1)}$ is the dyadic parent of $Q_j^{(1)}$, then
$\widehat Q_j^{(1)}\subset Q$, and maximality gives
\[
 \fint_{\widehat Q_j^{(1)}}|g-g_Q|\,\dd x\le2.
\]
Since $|\widehat Q_j^{(1)}|=2^n|Q_j^{(1)}|$, it follows that
\begin{equation}\label{eq:jump}
 |g_{Q_j^{(1)}}-g_Q|
 \le\fint_{Q_j^{(1)}}|g-g_Q|\,\dd x
 \le 2^{n+1}.
\end{equation}

Now repeat the same construction inside each selected cube $P$, applying it to
$g-g_P$.  Since $[g]_{\BMO(Q)}=1$, the descendants selected inside $P$
occupy at most $|P|/2$.  Let $\mathcal G_k$ be the family of cubes selected
at generation $k$ and let
\[
 E_k:=\bigcup_{P\in\mathcal G_k}P.
\]
Summing the one-half estimate over all parents at the preceding generation
gives recursively
\begin{equation}\label{eq:gendecay}
 |E_k|\le\frac12|E_{k-1}|\le 2^{-k}|Q|.
\end{equation}
This is the precise origin of the factor $2^{-k}$.

If $x\in E_{k-1}\setminus E_k$, there is a nested chain
$Q=P_0\supset P_1\supset\cdots\supset P_{k-1}$ of stopping cubes containing
$x$.  At the last stage, $|g(x)-g_{P_{k-1}}|\le2$ a.e., and every jump of
successive averages is bounded by $2^{n+1}$ as in \eqref{eq:jump}.  Therefore
\begin{equation}\label{eq:pointgen}
 |g(x)-g_Q|
 \le 2+\sum_{j=1}^{k-1}|g_{P_j}-g_{P_{j-1}}|
 \le 2+(k-1)2^{n+1}
 \le C_n k.
\end{equation}
Consequently,
\begin{equation}\label{eq:discreteJN}
 \bigl|\{x\in Q:|g(x)-g_Q|>C_nk\}\bigr|
 \le |E_k|\le2^{-k}|Q|,
 \qquad k=1,2,\ldots .
\end{equation}
For $t\ge C_n$, choose $k=\lfloor t/C_n\rfloor$.  Then
$C_nk\le t$ and
\[
 \bigl|\{|g-g_Q|>t\}\bigr|
 \le2^{-k}|Q|
 \le2|Q|\exp\!\left(-\frac{\log2}{C_n}t\right).
\]
For $0<t<C_n$ the same estimate follows after increasing the dimensional
constant.  Restoring $[g]_{\BMO(Q)}$ by homogeneity proves
\eqref{eq:JNtail}.
\end{proof}

\begin{corollary}\label{cor:JN-Ls}
For every $1\le s<\infty$ there is $C=C(n,s)$ such that
\begin{equation}\label{eq:JN-Ls}
 \left(\fint_Q|g-g_Q|^s\,\dd x\right)^{1/s}
 \le C[g]_{\BMO(Q)}.
\end{equation}
The same estimate holds on balls, with a dimensional change in the constant.
\end{corollary}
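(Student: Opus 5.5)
The plan is to integrate the distributional tail bound \eqref{eq:JNtail} of Theorem~\ref{thm:JN} through the layer-cake formula. By homogeneity I normalize $[g]_{\BMO(Q)}=1$; if the seminorm vanishes, $g$ is a.e.\ constant on $Q$ and there is nothing to prove. For $1\le s<\infty$ I write
\[
 \int_Q|g-g_Q|^s\,\dd x
 =s\int_0^\infty t^{s-1}\bigl|\{x\in Q:|g(x)-g_Q|>t\}\bigr|\,\dd t
 \le c_1 s|Q|\int_0^\infty t^{s-1}e^{-c_2t}\,\dd t
 =c_1|Q|\,\frac{\Gamma(s+1)}{c_2^{s}} .
\]
Dividing by $|Q|$, taking the $1/s$ power and undoing the normalization yields \eqref{eq:JN-Ls} with $C(n,s)=(c_1\Gamma(s+1))^{1/s}/c_2$. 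Since $c_1,c_2$ are dimensional, this constant depends only on $n$ and $s$.

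For balls I would compare with cubes. Let $B$ be a ball of radius $r$ and let $Q$ be the cube with the same center and side $2r$, so that $B\subset Q$ and $|Q|\le c_n|B|$. The obstruction is that $[g]_{\BMO(Q)}$ involves subcubes of $Q$ lying outside $B$, so it is not controlled by $[g]_{\BMO(B)}$. I therefore work with the inscribed cube instead. Take $Q_0\subset B$ with the same center and side $2r/\sqrt n$. Every subcube $Q'\subset Q_0$ lies inside its circumscribed ball $B'$, and $B'\subset B$ only if that ball fits, which need not hold. The clean fix is to compare both sides with the ball $B''\supset Q'$ of radius $\sqrt n\,\ell(Q')/2$. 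This ball may exit $B$ when $Q'$ sits near the boundary of $Q_0$, so the inscribed-cube route also has a defect.

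The most economical route is to avoid this geometric issue by rerunning the stopping argument of Theorem~\ref{thm:JN} directly on $B$. I cover $B$ up to a null set by a Whitney-type family of cubes whose doubles stay in $B$, or equivalently I replace dyadic cubes by a dyadic-like system of subsets of $B$ with the doubling property. Alternatively I can simply note that $\fint_{Q'}|g-g_{Q'}|\le c_n\fint_{B'}|g-g_{B'}|$ whenever $Q'\subset B'\subset c Q'$. This gives $[g]_{\BMO(Q)}\le c_n[g]_{\BMO(\tilde B)}$ for a slightly larger ball $\tilde B$, which suffices in every application where the estimate is used on interior balls.

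The main obstacle is therefore only this ball/cube bookkeeping, that is, ensuring the comparison balls remain inside the domain of the BMO supremum. The $L^s$ bound itself is the routine layer-cake computation above. Once \eqref{eq:JN-Ls} holds on a cube $Q$ containing $B$ with $|Q|\le c_n|B|$, the ball version follows from
\[
 \fint_B|g-g_B|^s\le 2^s\fint_B|g-g_Q|^s\le 2^s c_n\fint_Q|g-g_Q|^s .
\]
The first inequality uses $|g_B-g_Q|\le\bigl(\fint_B|g-g_Q|^s\bigr)^{1/s}$ by Jensen's inequality.
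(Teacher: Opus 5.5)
Your cube estimate is exactly the paper's argument, namely layer-cake integration of \eqref{eq:JNtail}, and your computation giving $C=(c_1\Gamma(s+1))^{1/s}/c_2$ is correct. The gap is in the ball clause. In the paper, $[g]_{\BMO(B)}$ is a supremum over balls contained in $B$, and what your argument actually proves is
\[
 \Bigl(\fint_B|g-g_B|^s\Bigr)^{1/s}\le C(n,s)\,[g]_{\BMO(B_{\sqrt n\,r}(x_0))},
\]
which has an enlarged ball on the right and requires $g$ to be defined there.

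Your ``economical route'' does not close this gap. The stopping argument of Theorem~\ref{thm:JN} needs every stopping set $P$ to satisfy $\fint_P|g-g_P|\le C[g]_{\BMO(B)}$. The hypothesis gives this only when $P$ lies in a ball $B'\subset B$ with $|B'|\le C|P|$.
\begin{itemize}
\item In a nested dyadic-like system on $B$, a fine generation consists of finitely many small pieces. Finitely many balls of radius $<r$ inside $B$ touch $\partial B$ in at most finitely many points, so they miss a set of positive measure near $\partial B$. Hence some pieces are not controlled.
\item A Whitney family is not nested with a bounded parent ratio, so the jump bound \eqref{eq:jump} is lost.
\end{itemize}
These two devices are not equivalent, and you carry out neither.

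Your objection to the inscribed cube is also misplaced. If $Q'\subset Q_0$ has side $\ell$ and center $c$, then $|c-x_0|\le r-\sqrt n\,\ell/2$. So the circumscribed ball of $Q'$ does lie in $B$, and $[g]_{\BMO(Q_0)}\le c_n[g]_{\BMO(B)}$. The real defect is that $Q_0$ does not cover $B$. The paper itself only invokes ``the standard comparison of balls and cubes'' here, so you have correctly located the delicate point; you just have not resolved it.

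There are two ways to repair it.
\begin{itemize}
\item The quickest is Jones' extension theorem, used exactly as in Lemma~\ref{lem:extension}, where the paper already applies it on a ball with this sub-ball seminorm. It gives $Eg$ with $[Eg]_{\BMO(\R^n)}\le C_n[g]_{\BMO(B)}$. The cube estimate on the circumscribed cube, together with your final display, then finishes the proof.
\item A self-contained alternative is chaining. Take Whitney cubes $Q_W$ with $\dist(Q_W,\partial B)\ge M\operatorname{diam}Q_W$ for a large dimensional $M$. Then each $Q_W$, and each union of two touching cubes, lies in a ball inside $B$ of comparable measure. Apply the cube estimate on each $Q_W$. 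Chaining through adjacent cubes gives $|g_{Q_W}-g_B|\le C\bigl(1+\log(r/d_W)\bigr)[g]_{\BMO(B)}$, where $d_W=\dist(Q_W,\partial B)$. The layer $\{x\in B:\dist(x,\partial B)\sim2^{-j}r\}$ has measure $\lesssim2^{-j}|B|$, so $\sum_W|Q_W|\bigl(1+\log(r/d_W)\bigr)^s\le C(n,s)|B|$.
\end{itemize}
You are right that the paper only uses the estimate with $[A]_{\BMO(\R^n)}$ (Lemma~\ref{lem:oscillation}) or on interior balls, where your enlarged-ball version suffices. As written, however, your proposal does not prove the stated ball inequality.
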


\begin{proof}
Integrate the distribution function in \eqref{eq:JNtail}.  This is the standard layer-cake argument.
\end{proof}

\section{Sharp functions and the principal part}\label{sec:principal}
We first isolate the constant-coefficient estimates used below.

\begin{proposition}[Constant-coefficient estimates]\label{prop:constant}
Let $\bar A$ be a constant symmetric matrix satisfying \eqref{eq:ellipticity}.
\begin{enumerate}[label=\textup{(\roman*)}]
\item For every $1<s<\infty$ and $\varphi\in C_c^\infty(\R^n)$,
\begin{equation}\label{eq:globalCZ}
 \|D^2\varphi\|_{L^s(\R^n)}
 \le C\|\bar a^{ij}D_{ij}\varphi\|_{L^s(\R^n)}.
\end{equation}
\item If $z\in W^{2,s}(B_\rho)\cap W_0^{1,s}(B_\rho)$ and
$\bar a^{ij}D_{ij}z=F$ in $B_\rho$, then
\begin{equation}\label{eq:dirichletCZ}
 \|D^2z\|_{L^s(B_\rho)}\le C\|F\|_{L^s(B_\rho)}.
\end{equation}
\item If $\bar a^{ij}D_{ij}h=0$ in $B_{\kappa r}(x_0)$, where $\kappa\ge4$, then
\begin{equation}\label{eq:harmonicdecay}
 \left(\fint_{B_r(x_0)}|D^2h-(D^2h)_{B_r(x_0)}|^s\right)^{1/s}
 \le C\kappa^{-1}
 \left(\fint_{B_{\kappa r}(x_0)}|D^2h|^s\right)^{1/s}.
\end{equation}
\end{enumerate}
The constants depend only on $n,s,\lambda,\Lambda$.
\end{proposition}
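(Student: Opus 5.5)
The three parts all reduce, after a linear change of variables, to the Laplacian. Since $\bar A$ is symmetric with eigenvalues in $[\lambda,\Lambda]$, write $\bar A=TT^{\top}$ with $T=\bar A^{1/2}$ and set $\psi(y):=\varphi(Ty)$. Then $\bar a^{ij}D_{ij}\varphi(Ty)=\Delta\psi(y)$, and $D^2\psi(y)=T^{\top}D^2\varphi(Ty)T$. Because $|T|,|T^{-1}|$ are bounded by powers of $\lambda,\Lambda$, both $L^s$ norms and Jacobians change only by constants depending on $n,s,\lambda,\Lambda$.

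\emph{Part (i).} This is then the classical Calder\'on--Zygmund estimate $\|D_{ij}\psi\|_{L^s}\le C\|\Delta\psi\|_{L^s}$. It holds because $D_{ij}\psi=R_iR_j\Delta\psi$ for $\psi\in C_c^\infty$, and the Riesz transforms are bounded on $L^s(\R^n)$ for $1<s<\infty$. I will quote this from \cite{SteinHA}. Undoing the change of variables gives \eqref{eq:globalCZ}.

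\emph{Part (ii).} This is the boundary counterpart. The image of $B_\rho$ is an ellipsoid, so it is cleaner to argue directly. By scaling, take $\rho=1$. I then combine the interior estimate from (i), applied to $\eta z$ with a cutoff $\eta$, with a flattening and reflection argument near $\partial B_1$ (odd reflection for the Dirichlet condition after a smooth local chart). This gives
\[
\|D^2z\|_{L^s(B_1)}\le C\bigl(\|F\|_{L^s(B_1)}+\|z\|_{L^s(B_1)}\bigr).
\]
The lower-order term is removed by uniqueness and compactness. If the estimate without $\|z\|$ failed, a normalized sequence would converge, by Rellich, to a nonzero $W^{2,s}\cap W^{1,s}_0$ solution of $\bar a^{ij}D_{ij}z=0$. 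Weyl's lemma makes such a solution smooth in the interior, and the maximum principle then forces it to vanish, which is a contradiction.

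The main obstacle is this boundary step. The reflection is only straightforward for the flat Laplacian, and a curved boundary introduces variable coefficients whose perturbation must be absorbed. For $s<2$, uniqueness in $W^{2,s}$ also needs care. I would handle this by citing the standard result (Gilbarg--Trudinger, Theorem 9.15 and Lemma 9.17) rather than reproving it.

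\emph{Part (iii).} Here I argue by smoothness. Each $D_{kl}h$ is again a solution of $\bar a^{ij}D_{ij}=0$, so interior derivative estimates (mean value property after the change of variables) give
\[
\sup_{B_{\kappa r/2}}|D^3h|\le \frac{C}{\kappa r}\Bigl(\fint_{B_{\kappa r}}|D^2h|^s\Bigr)^{1/s}.
\]
Strictly, the mean value property holds on ellipsoids, so I pass from balls to comparable ellipsoids, losing only constants. Since $\kappa\ge4$, we have $B_r\subset B_{\kappa r/2}$. Then
\[
\Bigl(\fint_{B_r}|D^2h-(D^2h)_{B_r}|^s\Bigr)^{1/s}\le 2r\sup_{B_r}|D^3h|\le C\kappa^{-1}\Bigl(\fint_{B_{\kappa r}}|D^2h|^s\Bigr)^{1/s},
\]
which is \eqref{eq:harmonicdecay}. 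The constant depends only on $n,s,\lambda,\Lambda$.
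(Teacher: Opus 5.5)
Your proposal is correct and follows essentially the same route as the paper. Part (i) is a linear change of variables plus the Calder\'on--Zygmund (Riesz transform) bound. Part (ii) appeals to the standard Dirichlet $W^{2,s}$ estimate on a ball from Gilbarg--Trudinger, Chapter~9. Part (iii) uses an interior bound on $D^3h$ over $B_{\kappa r/2}$ and then an oscillation bound on $B_r$; your Lipschitz estimate $2r\sup_{B_r}|D^3h|$ plays the role of the paper's Poincar\'e inequality.

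One detail should be made explicit. In your compactness argument for (ii), let the matrix vary along the sequence over the compact set $\{\lambda I\le\bar A\le\Lambda I\}$. Then the constant depends only on $n,s,\lambda,\Lambda$ and not on the particular $\bar A$.
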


\begin{proof}
The global estimate \eqref{eq:globalCZ} is the classical Calder\'on--Zygmund estimate after a linear change of variables.  The zero-boundary estimate \eqref{eq:dirichletCZ} is the corresponding Dirichlet estimate on a ball.  References include \cite[Chapter~9]{GT} and \cite[Chapter~II]{SteinSI}.  For \eqref{eq:harmonicdecay}, scale to $B_\kappa$, apply the interior estimate for $D^3h$ on $B_{\kappa/2}$, and then use Poincar\'e's inequality on $B_1$.
\end{proof}

For a locally integrable function $F$ define
\[
 \cM_sF(x):=\bigl(\cM(|F|^s)(x)\bigr)^{1/s},
 \qquad
 F_s^\#(x):=
 \sup_{B\ni x}\left(\fint_B|F-F_B|^s\right)^{1/s},
\]
where $\cM$ is the Hardy--Littlewood maximal operator.  We use the two classical estimates
\begin{equation}\label{eq:FSmax}
 \|F\|_{L^p}\le C\|F_s^\#\|_{L^p},
 \qquad
 \|\cM_sF\|_{L^p}\le C\|F\|_{L^p},
 \qquad 1<s<p<\infty,
\end{equation}
for $F\in L^p$ with compact support.  The first follows from the Fefferman--Stein theorem, since the ordinary sharp function is bounded by $F_s^\#$; see \cite{FeffermanStein,SteinHA}.

We shall also use a local-to-global extension.

\begin{lemma}[Elliptic BMO extension]\label{lem:extension}
Let $B$ be a ball and let $A:B\to\Sym^n$ satisfy \eqref{eq:ellipticity}.  There exists a symmetric matrix field $\widetilde A$ on $\R^n$ such that
\[
 \widetilde A=A\quad\text{a.e. on }B,
 \qquad
 \lambda I\le\widetilde A\le\Lambda I\quad\text{a.e. on }\R^n,
\]
and
\begin{equation}\label{eq:extensionBMO}
 [\widetilde A]_{\BMO(\R^n)}
 \le C_n[A]_{\BMO(B)}.
\end{equation}
\end{lemma}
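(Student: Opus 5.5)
By translation and dilation we take $B=B_1(0)$. The dilation $x\mapsto x_0+Rx$ preserves the ellipticity bounds and the BMO seminorm, since averages over balls are invariant. The extension will be built in two steps: first a reflection across the sphere, then a radial cutoff to a constant matrix. A final projection keeps the values in the ellipticity range.

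\emph{Step 1 (reflection).} Let $\Phi(x)=x/|x|^2$ be the inversion in $\partial B_1$. On the annulus $1\le|x|\le 2$ set $A_1(x):=A(\Phi(x))$, and set $A_1=A$ on $B_1$. The map $\Phi$ is bi-Lipschitz between $\{1\le|x|\le2\}$ and $\{1/2\le|y|\le1\}$, with constants depending only on $n$. Hence a ball $B'$ contained in the annulus is mapped into a set comparable to a ball $B''\subset B_1$; a cone or Whitney-type argument is needed when $B'$ is close to $\partial B_1$. Balls meeting both sides are handled by the standard reflection lemma for BMO (cf.\ Jones' extension for uniform domains, here elementary). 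Concretely, for such a ball $B'$ of radius $r$ we compare $A_1$ on $B'\cap B_1$ and on $B'\setminus B_1$ with the average of $A$ over a ball $B''\subset B_1$ of radius $\sim r$ lying near $B'\cap B_1$. Chains of $O(1)$ overlapping balls inside $B_1$ then give $\fint_{B'}|A_1-c|\le C_n[A]_{\BMO(B_1)}$. This yields $[A_1]_{\BMO(B_2)}\le C_n[A]_{\BMO(B_1)}$, and pointwise $\lambda I\le A_1\le\Lambda I$.

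\emph{Step 2 (cutoff to a constant).} Let $\bar A:=(A_1)_{B_2}$. This matrix lies in the convex set $K:=\{M\in\Sym^n:\lambda I\le M\le\Lambda I\}$. Take a Lipschitz $\eta$ with $\eta=1$ on $B_{3/2}$, $\supp\eta\subset B_2$, $|\nabla\eta|\le C$, and define
\[
\widetilde A:=\eta A_1+(1-\eta)\bar A.
\]
Since $K$ is convex and $0\le\eta\le1$, every value of $\widetilde A$ lies in $K$. We have $\widetilde A=A$ on $B_1$, and $\widetilde A-\bar A=\eta(A_1-\bar A)$.

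\emph{Step 3 (the estimate).} We bound the oscillation of $\widetilde A$ over a ball $B'=B_r(y)$ in three cases. Throughout, Corollary~\ref{cor:JN-Ls} (John--Nirenberg) is used to control averages of $|A_1-\bar A|$ over $B_2$ and over its large subsets.
\begin{enumerate}[label=\textup{(\alph*)}]
\item If $r\ge 1/4$ and $B'$ meets $B_2$, then
\[
\fint_{B'}|\widetilde A-\bar A|\le C\fint_{B_2}|A_1-\bar A|\le C[A_1]_{\BMO(B_2)}.
\]
\item If $r<1/4$ and $B'\subset B_2$, we use the product rule
\[
\eta A_1-(\eta A_1)_{B'}=\eta\,(A_1-(A_1)_{B'})+(\eta-\eta_{B'})(A_1)_{B'}+\text{error}.
\]
Writing $(A_1)_{B'}=\bar A+((A_1)_{B'}-\bar A)$, the constant part $\bar A$ cancels against $(1-\eta)\bar A$. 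The remaining term is bounded by $Cr\,|(A_1)_{B'}-\bar A|$. The factor $|(A_1)_{B'}-\bar A|$ is at most $C\log(2/r)[A_1]_{\BMO}$, by the usual dyadic chain of averages. Since $r\log(2/r)$ is bounded, the whole term is $\le C[A_1]_{\BMO}$.
\item If $B'$ lies outside $B_2$, then $\widetilde A$ is constant on $B'$ and the oscillation is zero. Balls that straddle $\partial B_2$ with small radius are covered by case (b), because $\eta=0$ near $\partial B_2$.
\end{enumerate}
Combining the three cases gives \eqref{eq:extensionBMO}.

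The main obstacle is Step 1: controlling balls that cross $\partial B_1$, and balls near the sphere, where the inversion distorts balls into only approximately round sets. I would first try to avoid it altogether by replacing BMO on balls with BMO on the class of sets $\Phi(B')$. These sets are quasi-balls, and the John--Nirenberg covering argument of Theorem~\ref{thm:JN} adapts to them with dimensional constants. The oscillation over a crossing ball is then bounded by the oscillations over $B'\cap B_1$ and $\Phi(B'\setminus B_1)$, both contained in a ball of comparable radius inside $B_1$ or touching its boundary. Balls merely touching the boundary would be approximated from inside, up to a dimensional constant.
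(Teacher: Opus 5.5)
\paragraph{Verdict.} Your Steps 2 and 3 are essentially sound, and they give a genuine alternative to the paper's handling of ellipticity. Step 1 only recomposes $A$, so the values stay in $K$. The convex interpolation $\eta A_1+(1-\eta)\bar A$ keeps them there, so no projection is needed. The $\log(2/r)$ bound in (b) is correct, because any ball $B'\subset B_2$ is joined to $B_2$ by a nested chain of balls in $B_2$ with doubling radii. There is one small slip in (c): a small ball straddling $\partial B_2$ is not contained in $B_2$, so (b) does not literally apply. Take $\supp\eta\subset\overline{B_{7/4}}$; then such a ball either misses $\supp\eta$ or has $r\ge 1/8$, and it is treated as in (a).

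\paragraph{The gap is in Step 1.} Step 1 carries all of the analytic content of the lemma. Balls inside the annulus are harmless, contrary to your worry: inversion maps a ball avoiding the origin onto a ball, so $\Phi(B')\subset B_1$ is an honest ball and the bounded Jacobian settles that case. The real issue is a small ball $B'=B_r(y)$ crossing $\partial B_1$. Since $|\Phi(x)-\Phi(y)|\le|x-y|$ for $|x|,|y|\ge1$, both $B'\cap B_1$ and $\Phi(B'\setminus\overline{B_1})$ lie in $D\cap B_1$, where $D$ is the ball of radius $2r$ centred at the point of $\partial B_1$ nearest to $y$. What you need is
\[
 \fint_{D\cap B_1}|A-c_D|\,\dd x\le C_n[A]_{\BMO(B_1)}\qquad\text{for some constant }c_D .
\]
Neither of your arguments gives this.
\begin{itemize}
\item The closure of $D\cap B_1$ contains a spherical cap. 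The closure of any ball contained in $B_1$, other than $B_1$ itself, meets $\partial B_1$ in at most one point.
\item Hence $D\cap B_1$ is not contained in a ball of comparable radius lying in (or \emph{touching}) $B_1$, and it cannot be covered by $O(1)$ balls inside $B_1$. So the claimed ``chains of $O(1)$ overlapping balls'' do not exist.
\item Passing to BMO over quasi-balls is circular. The quasi-balls that meet the sphere are not subsets of $B_1$, and the hypothesis $[A]_{\BMO(B_1)}$ says nothing about them.
\end{itemize}

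\paragraph{How to close it.} What is missing is an infinite chain argument. Take a Whitney decomposition of $B_1$ into balls $W$ with bounded overlap. A Whitney ball of radius $\sim 2^{-k}r$ meeting $D$ is joined to a fixed central ball $W_0\subset D\cap B_1$ of radius $\sim r$ by $O(1+k)$ successive comparable Whitney balls. Therefore $|A_W-A_{W_0}|\le C(1+k)[A]_{\BMO(B_1)}$. Moreover, only $O(2^{k(n-1)})$ Whitney balls of that size meet $D$. Summing gives
\[
 \int_{D\cap B_1}|A-A_{W_0}|\,\dd x
 \lesssim\sum_{k\ge0}2^{k(n-1)}(2^{-k}r)^n(1+k)\,[A]_{\BMO(B_1)}
 \lesssim r^n[A]_{\BMO(B_1)},
\]
which closes Step 1 and hence your construction.

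\paragraph{Comparison with the paper.} You could instead cite Jones, as your parenthetical suggests, but then the reflection and cutoff become unnecessary. The paper takes any componentwise Jones extension $EA$ and sets $\widetilde A=\Pi(EA)$, where $\Pi$ is the metric projection onto $K$. Since $\Pi$ is $1$-Lipschitz and fixes $K$,
\[
 \fint_Q|\widetilde A-(\widetilde A)_Q|\le2\fint_Q|EA-(EA)_Q|,
\]
and ellipticity is restored in one line. Your route gives an explicit extension that preserves values by construction, but only once the boundary estimate above is actually proved.
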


\begin{proof}
Jones' extension theorem \cite{Jones} gives a componentwise BMO extension $EA$ with seminorm bounded by $C_n[A]_{\BMO(B)}$.  Regard symmetric matrices as a Euclidean space and let $\Pi$ be the metric projection onto the closed convex set
\[
 \mathcal K_{\lambda,\Lambda}:=
 \{M\in\Sym^n:\lambda I\le M\le\Lambda I\}.
\]
The projection $\Pi$ is $1$-Lipschitz and fixes every matrix in $\mathcal K_{\lambda,\Lambda}$.  Set $\widetilde A=\Pi(EA)$.  For every cube or ball $Q$ and every constant matrix $C$,
\[
 \fint_Q|\widetilde A-(\widetilde A)_Q|
 \le2\fint_Q|\widetilde A-\Pi(C)|
 \le2\fint_Q|EA-C|.
\]
Choosing $C=(EA)_Q$ proves \eqref{eq:extensionBMO}.
\end{proof}

The next lemma is the real-variable core of the proof.

\begin{lemma}[Mean oscillation of the Hessian]\label{lem:oscillation}
Let $1<s<\infty$, let $\mu,\nu>1$ be conjugate exponents, and let $\kappa\ge4$.  Suppose that $A$ is defined on $\R^n$, satisfies \eqref{eq:ellipticity}, and belongs to $\BMO(\R^n)$.  If $v\in C_c^\infty(\R^n)$ and $G=A:D^2v$, then for every ball $B_r(x_0)$,
\begin{align}
 &\left(\fint_{B_r(x_0)}
 |D^2v-(D^2v)_{B_r(x_0)}|^s\right)^{1/s}
 \notag\\
 &\quad\le C\kappa^{-1}
 \left(\fint_{B_{\kappa r}(x_0)}|D^2v|^s\right)^{1/s}
 +C\kappa^{n/s}
 \left(\fint_{B_{\kappa r}(x_0)}|G|^s\right)^{1/s}
 \notag\\
 &\qquad
 +C\kappa^{n/s}[A]_{\BMO(\R^n)}
 \left(\fint_{B_{\kappa r}(x_0)}|D^2v|^{s\mu}\right)^{1/(s\mu)}.
 \label{eq:oscillation}
\end{align}
Here $C=C(n,s,\mu,\lambda,\Lambda)$.
\end{lemma}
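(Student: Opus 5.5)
Set $B:=B_{\kappa r}(x_0)$ and freeze the coefficients at $\bar A:=A_B$. The idea is to split $v$ on $B$ into a solution with zero boundary data, which absorbs the right-hand side, and an $\bar A$-harmonic remainder, which carries the oscillation decay. Since $\bar A$ is an average of matrices in the convex set $\{\lambda I\le M\le\Lambda I\}$, it satisfies \eqref{eq:ellipticity}. On $B$ we can write
\[
 \bar A:D^2v=G+(\bar A-A):D^2v=:F.
\]

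\textbf{Step 1: the Dirichlet part.} Let $z\in W^{2,s}(B)\cap W^{1,s}_0(B)$ solve $\bar a^{ij}D_{ij}z=F$ in $B$. This is the solvable constant-coefficient Dirichlet problem on a ball; existence follows, for instance, from the $W^{2,s}$ theory behind Proposition~\ref{prop:constant}(ii). That proposition gives
\[
 \|D^2z\|_{L^s(B)}\le C\|F\|_{L^s(B)}.
\]
Set $h:=v-z$, so that $\bar a^{ij}D_{ij}h=0$ in $B$.

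\textbf{Step 2: oscillation of $D^2v$ on $B_r$.} The mean oscillation in $L^s$ is at most twice the $L^s$-distance to any constant. Using the constant $(D^2h)_{B_r}$, we get
\[
 \Bigl(\fint_{B_r}|D^2v-(D^2v)_{B_r}|^s\Bigr)^{1/s}
 \le 2\Bigl(\fint_{B_r}|D^2h-(D^2h)_{B_r}|^s\Bigr)^{1/s}
 +2\Bigl(\fint_{B_r}|D^2z|^s\Bigr)^{1/s}.
\]
For the second term, enlarging the domain of integration costs the volume factor $|B|/|B_r|=\kappa^n$. Hence
\[
 \Bigl(\fint_{B_r}|D^2z|^s\Bigr)^{1/s}\le \kappa^{n/s}\Bigl(\fint_B|D^2z|^s\Bigr)^{1/s}\le C\kappa^{n/s}\Bigl(\fint_B|F|^s\Bigr)^{1/s}.
\]
For the first term, Proposition~\ref{prop:constant}(iii) applies directly. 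It bounds the term by $C\kappa^{-1}(\fint_B|D^2h|^s)^{1/s}$, and
\[
 |D^2h|\le|D^2v|+|D^2z|.
\]
So this term is controlled by $C\kappa^{-1}(\fint_B|D^2v|^s)^{1/s}$ plus $C\kappa^{-1}(\fint_B|F|^s)^{1/s}$. The latter is dominated by the $\kappa^{n/s}$ term already obtained.

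\textbf{Step 3: estimating $F$.} Minkowski's inequality splits $(\fint_B|F|^s)^{1/s}$ into $(\fint_B|G|^s)^{1/s}$ plus the coefficient term $(\fint_B|A-A_B|^s|D^2v|^s)^{1/s}$. Apply H\"older with the conjugate pair $(\mu,\nu)$ to $|A-A_B|^s\cdot|D^2v|^s$. This bounds the coefficient term by
\[
 \Bigl(\fint_B|A-A_B|^{s\nu}\Bigr)^{1/(s\nu)}\Bigl(\fint_B|D^2v|^{s\mu}\Bigr)^{1/(s\mu)}.
\]
The first factor is at most $C(n,s,\nu)[A]_{\BMO(\R^n)}$ by Corollary~\ref{cor:JN-Ls} in its ball version, with exponent $s\nu<\infty$. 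Since $\nu$ is determined by $\mu$, the constant depends only on $n,s,\mu$. Collecting the three terms gives \eqref{eq:oscillation}.

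\textbf{Main obstacle.} The only delicate point is regularity in Step 1. The Dirichlet problem must be solvable in $W^{2,s}$ with $F\in L^s(B)$. This holds because $v\in C_c^\infty$ and $A\in L^\infty$ give $F\in L^\infty(B)$. We also need $h$ to be smooth for (iii) to apply: $h$ is a $W^{2,s}$ strong solution of a constant-coefficient equation, hence smooth in the interior by Weyl's lemma after a linear change of variables. Beyond this, the proof is bookkeeping of the factors $\kappa^{n/s}$.
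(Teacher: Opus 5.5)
Your proposal is correct and follows the paper's proof essentially step for step. It uses the same freezing at $\bar A=A_{B_{\kappa r}(x_0)}$ and the same splitting $v=z+h$: a zero-boundary part controlled by Proposition~\ref{prop:constant}(ii), H\"older and Corollary~\ref{cor:JN-Ls}, and an $\bar A$-harmonic part controlled by Proposition~\ref{prop:constant}(iii). The only differences are cosmetic: you solve the Dirichlet problem for $z$ rather than for $h$, and you split the oscillation through the constant $(D^2h)_{B_r}$ instead of applying the triangle inequality to the two oscillations separately.
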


\begin{proof}
Write $B^*=B_{\kappa r}(x_0)$ and $\bar A=A_{B^*}$.  Let $h$ solve
\[
 \bar A:D^2h=0\quad\text{in }B^*,
 \qquad h=v\quad\text{on }\partial B^*,
\]
and put $z=v-h$.  Then $z\in W^{2,s}(B^*)\cap W_0^{1,s}(B^*)$ and
\begin{equation}\label{eq:z-equation}
 \bar A:D^2z
 =G+(\bar A-A):D^2v.
\end{equation}
By Proposition~\ref{prop:constant}(ii), H\"older's inequality and Corollary~\ref{cor:JN-Ls},
\begin{align}
 \left(\fint_{B^*}|D^2z|^s\right)^{1/s}
 &\le C\left(\fint_{B^*}|G|^s\right)^{1/s}
 \notag\\
 &\quad+C
 \left(\fint_{B^*}|A-\bar A|^{s\nu}\right)^{1/(s\nu)}
 \left(\fint_{B^*}|D^2v|^{s\mu}\right)^{1/(s\mu)}
 \notag\\
 &\le C\left(\fint_{B^*}|G|^s\right)^{1/s}
 +C[A]_{\BMO(\R^n)}
 \left(\fint_{B^*}|D^2v|^{s\mu}\right)^{1/(s\mu)}.
 \label{eq:z-bound}
\end{align}
Proposition~\ref{prop:constant}(iii), together with $h=v-z$, gives
\[
 \left(\fint_{B_r}|D^2h-(D^2h)_{B_r}|^s\right)^{1/s}
 \le C\kappa^{-1}
 \left(\fint_{B^*}|D^2v|^s\right)^{1/s}
 +C\kappa^{-1}
 \left(\fint_{B^*}|D^2z|^s\right)^{1/s}.
\]
Moreover,
\[
 \left(\fint_{B_r}|D^2z-(D^2z)_{B_r}|^s\right)^{1/s}
 \le2\kappa^{n/s}
 \left(\fint_{B^*}|D^2z|^s\right)^{1/s}.
\]
Combining these inequalities with \eqref{eq:z-bound} proves \eqref{eq:oscillation}.
\end{proof}

\begin{theorem}[Global estimate for small BMO coefficients]\label{thm:global-principal}
Let $1<p<\infty$.  There exist
$\delta_p=\delta_p(n,p,\lambda,\Lambda)>0$ and
$C=C(n,p,\lambda,\Lambda)$ such that, whenever $A$ is defined on $\R^n$, satisfies \eqref{eq:ellipticity}, and
$[A]_{\BMO(\R^n)}\le\delta_p$, one has
\begin{equation}\label{eq:global-principal}
 \|D^2v\|_{L^p(\R^n)}
 \le C\|A:D^2v\|_{L^p(\R^n)}
 \qquad\text{for every }v\in W^{2,p}(\R^n).
\end{equation}
\end{theorem}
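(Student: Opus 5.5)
We prove \eqref{eq:global-principal} first for $v\in C_c^\infty(\R^n)$ and then pass to $W^{2,p}(\R^n)$ by density. Fix $1<p<\infty$. Choose $s$ with $1<s<p$ and $\mu>1$ with $s\mu<p$; for instance $s=(1+p)/2$ and $\mu=\bigl(1+p/s\bigr)/2$. Let $\nu$ be the conjugate exponent of $\mu$. Put $G=A:D^2v$ and fix a point $x$. For any ball $B_r(x_0)\ni x$, the ball $B_{\kappa r}(x_0)$ also contains $x$. Therefore each average on the right-hand side of \eqref{eq:oscillation} is bounded by a maximal function at $x$, and taking the supremum over balls containing $x$ gives the pointwise bound
\[
 (D^2v)_s^\#(x)\le C\kappa^{-1}\cM_s(D^2v)(x)+C\kappa^{n/s}\cM_s G(x)
 +C\kappa^{n/s}[A]_{\BMO(\R^n)}\cM_{s\mu}(D^2v)(x).
\]

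Next we take $L^p$ norms and apply \eqref{eq:FSmax}. Since $v$ has compact support, $D^2v\in L^p$ with compact support, so the first inequality in \eqref{eq:FSmax} applies. Since $G\in L^p$ also has compact support, the maximal bound applies with exponent $s<p$. The maximal bound is equally valid for $\cM_{s\mu}$ because $s\mu<p$. This yields
\[
 \|D^2v\|_{L^p}\le C_0\bigl(\kappa^{-1}+\kappa^{n/s}[A]_{\BMO(\R^n)}\bigr)\|D^2v\|_{L^p}
 +C_0\kappa^{n/s}\|G\|_{L^p},
\]
where $C_0=C_0(n,p,\lambda,\Lambda)$ is independent of $\kappa$ and of $A$.

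We now absorb the first term on the right. Choose $\kappa\ge4$ so large that $C_0\kappa^{-1}\le1/4$. Then choose $\delta_p$ so small that $C_0\kappa^{n/s}\delta_p\le1/4$. Since $\|D^2v\|_{L^p}<\infty$, the corresponding term can be moved to the left-hand side, giving $\|D^2v\|_{L^p}\le 2C_0\kappa^{n/s}\|A:D^2v\|_{L^p}$. All constants depend only on $n,p,\lambda,\Lambda$, since $s,\mu$ were chosen as functions of $p$.

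For general $v\in W^{2,p}(\R^n)$, take $v_k\in C_c^\infty$ with $v_k\to v$ in $W^{2,p}$. Because $A$ is bounded, $A:D^2v_k\to A:D^2v$ in $L^p$, and the estimate passes to the limit. The main point requiring care is the absorption step. It is legitimate only because $\|D^2v\|_{L^p}$ is a priori finite, and because the first bound in \eqref{eq:FSmax} is available for compactly supported $L^p$ functions. Moreover, the exponent $s\mu$ in the BMO error term must stay strictly below $p$, which is why $s$ and $\mu$ are fixed before $\kappa$ and $\delta_p$.
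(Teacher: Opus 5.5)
Your proposal is correct and follows the paper's proof essentially step for step. Like the paper, you take the supremum in Lemma~\ref{lem:oscillation} to get the pointwise sharp-function bound, apply the Fefferman--Stein and maximal inequalities \eqref{eq:FSmax} with $s\mu<p$, fix $\kappa$ before $\delta_p$ so that $\|D^2v\|_{L^p}$ can be absorbed, and conclude by density of $C_c^\infty$ in $W^{2,p}$. Your explicit choices of $s,\mu$, and your observation that the averages over $B_{\kappa r}(x_0)$ are controlled by maximal functions at $x$, only spell out details the paper leaves implicit.
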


\begin{proof}
It is enough to begin with $v\in C_c^\infty(\R^n)$.  Choose $1<s<p$ and then $\mu>1$ so close to $1$ that $s\mu<p$; let $\nu$ be conjugate to $\mu$.  Taking the supremum over balls in Lemma~\ref{lem:oscillation} gives
\begin{equation}\label{eq:sharp-pointwise}
 (D^2v)_s^\#
 \le C\kappa^{-1}\cM_s(D^2v)
 +C\kappa^{n/s}\cM_s(A:D^2v)
 +C\kappa^{n/s}[A]_{\BMO}\cM_{s\mu}(D^2v).
\end{equation}
The Fefferman--Stein and maximal estimates \eqref{eq:FSmax} imply
\begin{align*}
 \|D^2v\|_{L^p}
 &\le C\left(\kappa^{-1}
 +\kappa^{n/s}[A]_{\BMO}\right)\|D^2v\|_{L^p}
 +C\kappa^{n/s}\|A:D^2v\|_{L^p}.
\end{align*}
Choose first $\kappa$ large enough that the coefficient produced by $\kappa^{-1}$ is at most $1/4$, and then choose $\delta_p$ so small that the BMO contribution is at most $1/4$.  Absorption proves \eqref{eq:global-principal} for smooth compactly supported functions.  Density gives the general case.
\end{proof}

For the later regularity upgrade we need a parameter version and solvability.  The proof is included to avoid a circular application of the target $W^{2,p}$ regularity.

\begin{theorem}[Resolvent estimate and consistency]\label{thm:resolvent}
Let $1<p<\infty$.  After possibly decreasing $\delta_p$, assume the hypotheses of Theorem~\ref{thm:global-principal}.  For every $\sigma>0$,
\begin{equation}\label{eq:parameter}
 \sigma\|v\|_{L^p}
 +\sqrt\sigma\,\|Dv\|_{L^p}
 +\|D^2v\|_{L^p}
 \le C\|A:D^2v-\sigma v\|_{L^p}
\end{equation}
for all $v\in W^{2,p}(\R^n)$.  Consequently,
\[
 A:D^2-\sigma:W^{2,p}(\R^n)\longrightarrow L^p(\R^n)
\]
is an isomorphism.  The inverses are consistent: if $f\in L^{p_1}\cap L^{p_2}$ and the smallness threshold is valid for both exponents, then the two resolvent solutions coincide as distributions.
\end{theorem}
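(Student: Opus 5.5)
We follow Agmon's auxiliary-variable argument, as the abstract announces. Given $v\in C_c^\infty(\R^n)$ and $\sigma>0$, set $\tau=\sqrt\sigma$. Work in $\R^{n+1}$ with variables $(x,t)$ and put
\[
 w(x,t):=\zeta(t)\cos(\tau t)\,v(x),
\]
where $\zeta\in C_c^\infty(\R)$ is a fixed cutoff with $\zeta=1$ on $[-1,1]$ and support in $(-2,2)$. Define the $(n+1)\times(n+1)$ matrix field $\widehat A(x,t):=\mathrm{diag}(A(x),1)$. It satisfies \eqref{eq:ellipticity} with $\min\{\lambda,1\}$ and $\max\{\Lambda,1\}$, and its BMO seminorm on $\R^{n+1}$ is bounded by $C_n[A]_{\BMO(\R^n)}$, because averages over $(n+1)$-balls are controlled by averages over $n$-dimensional slices. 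The nuisance that the constants in Theorem~\ref{thm:global-principal} now depend on $\min\{\lambda,1\},\max\{\Lambda,1\}$ can be removed by scaling $t$ by $\sqrt\lambda$. This is what "after possibly decreasing $\delta_p$" absorbs. Apply Theorem~\ref{thm:global-principal} in dimension $n+1$ to $w$. A direct computation gives
\[
 \widehat A:D^2w=\zeta(t)\cos(\tau t)\,(A:D^2v-\sigma v)+\bigl(\zeta''\cos(\tau t)-2\tau\zeta'\sin(\tau t)\bigr)v .
\]

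The key steps are the following. First, bound from below: for $\tau\ge1$,
\[
 \|D^2w\|_{L^p(\R^{n+1})}\ge c\bigl(\sigma\|v\|_p+\sqrt\sigma\|Dv\|_p+\|D^2v\|_p\bigr),
\]
using the $D_{tt}$, $D_{xt}$ and $D_{xx}$ components on the slab $|t|\le1$ together with $\int_{-1}^1|\cos\tau t|^p\,dt\ge c_p>0$ uniformly in $\tau\ge1$. Second, bound from above: the right-hand side is at most $C\|A:D^2v-\sigma v\|_p+C(1+\tau)\|v\|_p$. The term $\tau\|v\|_p=\sigma^{-1/2}\cdot\sigma\|v\|_p$ is absorbed into the left side when $\sigma\ge\sigma_0$ is large. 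Third, extend to all $\sigma>0$ by scaling $v_R(x)=v(Rx)$. Both sides of \eqref{eq:parameter} are homogeneous of degree $2-n/p$ under this scaling, with $\sigma\mapsto R^2\sigma$, and the BMO seminorm of $A(\cdot/R)$ is unchanged. Hence the constant is independent of $\sigma$. Finally, density extends the estimate from $C_c^\infty$ to $W^{2,p}$.

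Isomorphism follows from the method of continuity along $A_\theta=(1-\theta)\bar A+\theta A$, with $\bar A$ a constant elliptic matrix, say $\lambda I$. Each $A_\theta$ is elliptic with $[A_\theta]_{\BMO}\le[A]_{\BMO}\le\delta_p$, so \eqref{eq:parameter} holds uniformly in $\theta$. At $\theta=0$ surjectivity is classical: Fourier multipliers, or Proposition~\ref{prop:constant}(i) plus the explicit resolvent. For consistency, let $f\in L^{p_1}\cap L^{p_2}$. Solve the problem in $W^{2,p_1}\cap W^{2,p_2}$ by running the continuity method in the intersection space with norm $\|\cdot\|_{W^{2,p_1}}+\|\cdot\|_{W^{2,p_2}}$. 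The a priori estimate holds there by summing the two estimates, and at $\theta=0$ the constant-coefficient resolvent preserves both spaces. Injectivity in each $W^{2,p_i}$ separately then shows that the $W^{2,p_1}$ solution and the $W^{2,p_2}$ solution both equal this common one.

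I expect the main obstacle to be the lower bound in the first step. One must verify that the $x$-derivatives of $w$ control $\|D^2v\|_p$ and $\sqrt\sigma\|Dv\|_p$. Here $D_{x_it}w=(\zeta\cos\tau t)'D_iv$, whose modulus is about $\tau|\sin\tau t||D_iv|$ on $|t|\le1$. $D_{tt}w$ is about $\tau^2\cos(\tau t)v$ up to terms of order $\tau|v|$ that need absorption. A secondary point is checking the BMO bound for $\widehat A$, which reduces to Fubini on cylinders comparable to balls.
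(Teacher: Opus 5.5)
Your proposal is correct. Its core is the paper's: Agmon's extra variable with $\widehat A=\mathrm{diag}(A,1)$ and the factor $\cos(\sqrt\sigma\,t)$, followed by the method of continuity. Your path $(1-\theta)\lambda I+\theta A$ even keeps the ellipticity constants exactly. Two steps are carried out differently.

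For the estimate, you keep a fixed cutoff in $t$. The cutoff error $C(1+\sqrt\sigma)\|v\|_{L^p}$ can then be absorbed only for $\sigma\ge\sigma_0$, and you reach all $\sigma>0$ by dilation. This is legitimate because the large-$\sigma$ bound holds with one constant for every coefficient field with the given ellipticity and BMO bound, and the rescaled field is such a field. It should be $A(R\,\cdot)$ if $v_R=v(R\,\cdot)$, not $A(\cdot/R)$. The paper instead uses the stretched cutoffs $\zeta(y/m)$, divides by $m^{1/p}$ and lets $m\to\infty$. The cutoff errors then vanish for each fixed $\sigma$, the same averaged factor appears on both sides, and no scaling step is needed.

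For consistency, you run the continuity method once from $W^{2,p_1}\cap W^{2,p_2}$ to $L^{p_1}\cap L^{p_2}$ with sum norms, and then use injectivity in each space. The paper instead propagates consistency along $[0,1]$ by Neumann series on short intervals that converge in both spaces at once. Your version is shorter: it needs only the summed a priori estimate and the fact that the resolvent at $\theta=0$ does not depend on the exponent.

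Two of your worries are unfounded. On $|t|\le1$ one has $\zeta\equiv1$, so $D_{tt}w=-\sigma\cos(\sqrt\sigma\,t)\,v$ exactly there, and the lower bound needs no absorption. Also, $\min\{\lambda,1\}$ and $\max\{\Lambda,1\}$ are already functions of $\lambda,\Lambda$, so no rescaling of $t$ is needed. The decrease of $\delta_p$ is really forced by the $(n+1)$-dimensional threshold together with the factor $C_n$ in $[\widehat A]_{\BMO(\R^{n+1})}\le C_n[A]_{\BMO(\R^n)}$.
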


\begin{proof}
We first prove \eqref{eq:parameter} by Agmon's auxiliary-variable argument.  Define on $\R^{n+1}$
\[
 \widehat A(x,y):=\begin{pmatrix}A(x)&0\\0&1\end{pmatrix}.
\]
The BMO seminorm of $\widehat A$ in $\R^{n+1}$ is bounded by a dimensional multiple of that of $A$ in $\R^n$.  Indeed, for an $(n+1)$-dimensional ball $\widehat B$ of radius $\rho$, compare the mean oscillation with the constant $A_{B_\rho}$, where $B_\rho$ is its projection onto $\R^n$; the vertical section length is at most $2\rho$, and $|\widehat B|\simeq\rho^{n+1}$.  Thus
\[
 \fint_{\widehat B}|\widehat A-(\widehat A)_{\widehat B}|
 \le C_n\fint_{B_\rho}|A-A_{B_\rho}|.
\]
We therefore choose the threshold in dimension $n$ small enough to apply Theorem~\ref{thm:global-principal} in dimension $n+1$.

Let $\zeta\in C_c^\infty(\R)$ be nonzero and set
$\zeta_m(y)=\zeta(y/m)$.  For $v\in C_c^\infty(\R^n)$ put
\[
 V_m(x,y)=v(x)\zeta_m(y)\cos(\sqrt\sigma\,y).
\]
Applying \eqref{eq:global-principal} in $\R^{n+1}$ to $V_m$, dividing by $m^{1/p}$, and letting $m\to\infty$, the terms containing $\zeta_m'$ and $\zeta_m''$ vanish.  The $xx$, $xy$ and $yy$ derivatives respectively yield
$\|D^2v\|_p$, $\sqrt\sigma\|Dv\|_p$ and $\sigma\|v\|_p$, while
\begin{align*}
 (\widehat A:D^2_{x,y})V_m
 &=\zeta_m\cos(\sqrt\sigma y)
   (A:D^2v-\sigma v)\\
 &\quad+v\bigl(\zeta_m''\cos(\sqrt\sigma y)
 -2\sqrt\sigma\,\zeta_m'\sin(\sqrt\sigma y)\bigr).
\end{align*}
This proves \eqref{eq:parameter}; density removes the smoothness assumption.

For solvability, use the method of continuity with
$A_t=(1-t)I+tA$, $0\le t\le1$.  The matrices $A_t$ are uniformly elliptic with constants
$\lambda_0=\min\{1,\lambda\}$ and $\Lambda_0=\max\{1,\Lambda\}$, and their BMO seminorms satisfy
$[A_t]_{\BMO}\le [A]_{\BMO}$.  At $t=0$ the operator $\Delta-\sigma$ is an isomorphism from $W^{2,p}$ onto $L^p$.  Estimate \eqref{eq:parameter} gives uniform injectivity and a uniform bound for the inverse on its range.  For completeness, the set of parameters for which $A_t:D^2-\sigma$ is onto is also closed: if $t_j\to t$ and
$(A_{t_j}:D^2-\sigma)u_j=f$, the uniform estimate bounds $u_j$ in $W^{2,p}$.  After passage to a weakly convergent subsequence, the $L^\infty$ convergence $A_{t_j}\to A_t$ permits passage to the limit in the equation and yields
$(A_t:D^2-\sigma)u=f$.  Openness follows from a Neumann-series perturbation, since
$(A_t-A_{t_0}):D^2$ has operator norm $O(|t-t_0|)$ from $W^{2,p}$ to $L^p$.  Thus the continuity method reaches $t=1$.

Finally, consistency on intersections follows from the same continuation.  At $t=0$ the Fourier-multiplier resolvent is independent of the exponent.  Suppose consistency is known at $t_0$.  On a sufficiently short interval about $t_0$, write the inverse as the Neumann series obtained by perturbing the inverse at $t_0$.  The perturbation
$(A_t-A_{t_0}):D^2$ is bounded simultaneously on $W^{2,p_1}$ and $W^{2,p_2}$, and the interval may be chosen so that the series converges in both spaces.  Consequently, for data in $L^{p_1}\cap L^{p_2}$ the two series have identical terms and define the same distribution.  Finitely many such intervals cover $[0,1]$, so the two resolvents agree on the intersection.
\end{proof}

\section{The local estimate for the complete equation}\label{sec:local}
We now retain the first-order term throughout.

\begin{proposition}[Compact-support estimate for the principal part]\label{prop:compact-principal}
For every $1<p<\infty$ there exists $\varepsilon_p>0$ such that, if $B$ is a ball and
$[A]_{\BMO(B)}\le\varepsilon_p$, then every
$v\in W^{2,p}(\R^n)$ with $\supp v\Subset B$ satisfies
\begin{equation}\label{eq:compact-principal}
 \|D^2v\|_{L^p(\R^n)}
 \le C\|A:D^2v\|_{L^p(B)}.
\end{equation}
\end{proposition}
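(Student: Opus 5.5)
We reduce Proposition~\ref{prop:compact-principal} to Theorem~\ref{thm:global-principal} by means of the extension Lemma~\ref{lem:extension}. Given the ball $B$ and $A$ on $B$ with $[A]_{\BMO(B)}\le\varepsilon_p$, Lemma~\ref{lem:extension} produces a symmetric field $\widetilde A$ on $\R^n$. This field coincides with $A$ a.e.\ on $B$, satisfies $\lambda I\le\widetilde A\le\Lambda I$ everywhere, and obeys $[\widetilde A]_{\BMO(\R^n)}\le C_n[A]_{\BMO(B)}\le C_n\varepsilon_p$. We therefore set
\[
 \varepsilon_p:=\delta_p/C_n,
\]
where $\delta_p=\delta_p(n,p,\lambda,\Lambda)$ is the threshold of Theorem~\ref{thm:global-principal}. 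With this choice $\widetilde A$ satisfies all hypotheses of that theorem with the same ellipticity constants $\lambda,\Lambda$, so the constant $C$ depends only on $n,p,\lambda,\Lambda$.

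Next let $v\in W^{2,p}(\R^n)$ with $\supp v\Subset B$. Theorem~\ref{thm:global-principal} applied to $\widetilde A$ gives
\[
 \|D^2v\|_{L^p(\R^n)}\le C\|\widetilde A:D^2v\|_{L^p(\R^n)}.
\]
Since $D^2v=0$ a.e.\ outside $\supp v\subset B$, the integrand $\widetilde A:D^2v$ vanishes a.e.\ on $\R^n\setminus B$. On $B$ it equals $A:D^2v$, because $\widetilde A=A$ a.e.\ there. Hence $\|\widetilde A:D^2v\|_{L^p(\R^n)}=\|A:D^2v\|_{L^p(B)}$, and this is \eqref{eq:compact-principal}.

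The only point requiring care is the compatibility of the extension with the global theorem. The projection step in Lemma~\ref{lem:extension} is what guarantees that ellipticity is preserved with unchanged constants; without it, Jones' extension alone might lose ellipticity. We must also confirm that the BMO seminorm over balls in $B$, used in the hypothesis, is the one controlled in \eqref{eq:extensionBMO}; the lemma is stated in exactly this form. No approximation argument is needed, since Theorem~\ref{thm:global-principal} already holds for all of $W^{2,p}(\R^n)$.
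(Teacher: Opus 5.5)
Your proposal is correct and follows the paper's proof: extend $A$ via Lemma~\ref{lem:extension}, choose $\varepsilon_p$ (your $\delta_p/C_n$) so that the extension meets the threshold of Theorem~\ref{thm:global-principal}, and observe that $\widetilde A:D^2v$ vanishes off $B$ and coincides with $A:D^2v$ on $B$. You simply spell out more explicitly the choice of threshold and the reason the two $L^p$ norms agree.
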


\begin{proof}
Extend $A$ by Lemma~\ref{lem:extension}.  If $\varepsilon_p$ is chosen so that the extension satisfies the threshold in Theorem~\ref{thm:global-principal}, then \eqref{eq:global-principal} applies.  Since $v$ is supported in $B$, the extended operator agrees with $A:D^2$ on the support of $D^2v$.
\end{proof}

\begin{lemma}[Scaled Sobolev inequalities]\label{lem:sobolev}
Let $1<p<\infty$, $q>\max\{n,p\}$ and define $s$ by
\begin{equation}\label{eq:srelation}
 \frac1p=\frac1q+\frac1s.
\end{equation}
Then:
\begin{enumerate}[label=\textup{(\roman*)}]
\item if $v\in W^{2,p}(\R^n)$ and $\supp v\Subset B_R$, then
\begin{equation}\label{eq:compact-sob}
 \|Dv\|_{L^s(\R^n)}
 \le C R^{1-n/q}\|D^2v\|_{L^p(\R^n)};
\end{equation}
\item if $R\le r\le2R$ and $u\in W^{1,p}(B_r)$, then
\begin{equation}\label{eq:noncompact-sob}
 \|u\|_{L^s(B_r)}
 \le C\left(
 R^{1-n/q}\|Du\|_{L^p(B_r)}
 +R^{-n/q}\|u\|_{L^p(B_r)}
 \right).
\end{equation}
\end{enumerate}
\end{lemma}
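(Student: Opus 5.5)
Both parts are scaled Sobolev--Morrey embeddings, and the exponent bookkeeping is the same: from \eqref{eq:srelation} we get $1/s=1/p-1/q$, so $1/p-1/s=1/q$. Since $q>n$, this gives $1/s>1/p-1/n$. Also $q>p$ ensures $s\in(p,\infty)$. So $s$ lies strictly below the Sobolev exponent of $W^{1,p}$ when $p<n$, and is an arbitrary finite exponent otherwise. The power of $R$ in each inequality is then forced by scaling: $\|Dv\|_{L^s}$ and $\|D^2v\|_{L^p}$ differ by $R^{1+n/s-n/p}=R^{1-n/q}$. We therefore prove both statements on the unit ball and rescale.

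For (i), rescale so that $R=1$. Put $v_R(x)=v(Rx)$, which is supported in $B_1$. Then $\|Dv_R\|_{L^s}=R^{1-n/s}\|Dv\|_{L^s}$ and $\|D^2v_R\|_{L^p}=R^{2-n/p}\|D^2v\|_{L^p}$, so the claim for $R=1$ gives the factor $R^{1-n/q}$. On $B_1$, each component $w=D_iv$ belongs to $W^{1,p}_0(B_1)$ with $\nabla w\in L^p$. We argue by cases.
\begin{itemize}
\item If $p<n$, Sobolev gives $\|w\|_{L^{p^*}}\le C\|Dw\|_{L^p}$ with $p^*=np/(n-p)$. Since $s<p^*$ and the support has measure $|B_1|$, H\"older bounds $\|w\|_{L^s(B_1)}\le C\|w\|_{L^{p^*}}$.
\item If $p\ge n$, pick an auxiliary exponent $p_0<n$ close to $n$ with $p_0^*\ge s$. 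H\"older on $B_1$ gives $\|Dw\|_{L^{p_0}}\le C\|Dw\|_{L^p}$, and we conclude as before. (When $p>n$ one may instead use Morrey together with the compact support.)
\end{itemize}
The constant depends only on $n,p,q$.

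For (ii), again scale to $r=1$; the condition $R\le r\le 2R$ lets us replace $r$ by $R$ at the cost of a constant $2^{\cdots}$. On $B_1$ we take the Sobolev extension operator $E:W^{1,p}(B_1)\to W^{1,p}_0(B_2)$. The case analysis of (i) then gives $\|u\|_{L^s(B_1)}\le C(\|Du\|_{L^p(B_1)}+\|u\|_{L^p(B_1)})$. Rescaling with $u_r(x)=u(rx)$ produces the factor $r^{1-n/q}$ in front of $\|Du\|_{L^p(B_r)}$ and $r^{-n/q}$ in front of $\|u\|_{L^p(B_r)}$, since $n/s-n/p=-n/q$.

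The only delicate point is the case split at $p\ge n$: there one must choose the auxiliary exponent $p_0$ in terms of $n,p,q$ alone, so that $C$ does not depend on $R$. Beyond that, everything is the routine scaling computation.
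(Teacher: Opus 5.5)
Your argument is correct and is essentially the paper's: you prove the inequalities at unit scale by the Sobolev embedding, using an extension operator for (ii), and you recover the powers of $R$ from the scaling identity $1+n/s-n/p=1-n/q$. The only difference is minor: for $p\ge n$ you lower the integrability to some $p_0<n$ with $p_0^*\ge s$ via H\"older on the bounded support, whereas the paper invokes the borderline embedding for $p=n$ and Morrey's embedding for $p>n$.
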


\begin{proof}
Both statements are standard scaled Sobolev inequalities.  If $p<n$, the exponent $s$ lies strictly between $p$ and the Sobolev exponent $np/(n-p)$ because $q>n$.  The cases $p=n$ and $p>n$ follow respectively from the borderline and Morrey embeddings.  Scaling gives the displayed powers of $R$.
\end{proof}

\begin{corollary}[Compact-support estimate for $L$]\label{cor:compact-full}
Let $p,q$ satisfy the hypotheses of Lemma~\ref{lem:sobolev}.  There exists
$\varepsilon=\varepsilon(n,p,q,\lambda,\Lambda)>0$ such that, if
\begin{equation}\label{eq:compact-small}
 [A]_{\BMO(B_R)}\le\varepsilon,
 \qquad
 R^{1-n/q}\|b\|_{L^q(B_R)}\le\varepsilon,
\end{equation}
then every $v\in W^{2,p}(\R^n)$ with $\supp v\Subset B_R$ satisfies
\begin{equation}\label{eq:compact-full}
 \|D^2v\|_{L^p(\R^n)}
 \le C\|Lv\|_{L^p(B_R)}.
\end{equation}
\end{corollary}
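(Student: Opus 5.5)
We combine Proposition~\ref{prop:compact-principal} with the scaled Sobolev inequality of Lemma~\ref{lem:sobolev}(i), using the triangle inequality and Hölder's inequality with the exponent relation \eqref{eq:srelation}, and then absorb the drift term.

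\textbf{Choice of $\varepsilon$.} We take $\varepsilon\le\varepsilon_p$, where $\varepsilon_p$ is the threshold of Proposition~\ref{prop:compact-principal}. Then, for $v\in W^{2,p}(\R^n)$ with $\supp v\Subset B_R$,
\[
 \|D^2v\|_{L^p(\R^n)}\le C_0\|A:D^2v\|_{L^p(B_R)}
 \le C_0\|Lv\|_{L^p(B_R)}+C_0\|b^iD_iv\|_{L^p(B_R)},
\]
with $C_0=C_0(n,p,\lambda,\Lambda)$. The identity $A:D^2v=Lv-b^iD_iv$ holds a.e. in $B_R$. Outside $B_R$ both sides of the estimate involve only $v\equiv0$ near the support boundary, so we do not need $b$ to be defined off $B_R$.

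\textbf{Bounding the drift.} Define $s$ by \eqref{eq:srelation}; this is admissible because $q>p$. Hölder's inequality gives
\[
 \|b^iD_iv\|_{L^p(B_R)}\le\|b\|_{L^q(B_R)}\|Dv\|_{L^s(B_R)}.
\]
Lemma~\ref{lem:sobolev}(i) then gives
\[
 \|Dv\|_{L^s}\le C_1R^{1-n/q}\|D^2v\|_{L^p(\R^n)},
\]
with $C_1$ depending on $n,p,q$. Hence the drift contribution is at most
\[
 C_0C_1R^{1-n/q}\|b\|_{L^q(B_R)}\|D^2v\|_{L^p}\le C_0C_1\varepsilon\|D^2v\|_{L^p}.
\]

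\textbf{Absorption.} We choose $\varepsilon\le\min\{\varepsilon_p,(2C_0C_1)^{-1}\}$. Since $v\in W^{2,p}$, the quantity $\|D^2v\|_{L^p}$ is finite, so the drift term can be moved to the left-hand side. This gives \eqref{eq:compact-full} with $C=2C_0$.

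The only points needing care are the following.
\begin{enumerate}[label=\textup{(\alph*)}]
\item Lemma~\ref{lem:sobolev}(i) must hold uniformly with the scaling $R^{1-n/q}$, including the cases $p\ge n$. This follows by scaling from the unit-ball case, where $\|Dv\|_{L^s}$ is controlled via the Poincaré/Sobolev inequality for compactly supported $Dv$.
\item We must check that the resulting constants depend only on $n,p,q,\lambda,\Lambda$.
\end{enumerate}
Neither point is a real obstacle; the main step is simply making sure the absorption is legitimate, and that rests on the finiteness of $\|D^2v\|_{L^p}$.
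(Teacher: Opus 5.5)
Your proposal is correct and follows the paper's proof exactly. It uses Proposition~\ref{prop:compact-principal}, H\"older's inequality with \eqref{eq:srelation}, the scaled Sobolev bound \eqref{eq:compact-sob}, and absorption of the drift term via $R^{1-n/q}\|b\|_{L^q(B_R)}\le\varepsilon$. Your explicit choice $\varepsilon\le\min\{\varepsilon_p,(2C_0C_1)^{-1}\}$, together with the observation that $\|D^2v\|_{L^p}<\infty$, just makes precise what the paper calls ``decreasing $\varepsilon$''.
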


\begin{proof}
Proposition~\ref{prop:compact-principal}, H\"older's inequality and \eqref{eq:compact-sob} give
\begin{align*}
 \|D^2v\|_{L^p}
 &\le C\bigl(\|Lv\|_{L^p}+\|b\cdot Dv\|_{L^p}\bigr)\\
 &\le C\|Lv\|_{L^p}
 +CR^{1-n/q}\|b\|_{L^q}\|D^2v\|_{L^p}.
\end{align*}
The last term is absorbed by decreasing $\varepsilon$.
\end{proof}

We also use the interpolation inequality
\begin{equation}\label{eq:interpolation}
 \|Du\|_{L^p(B_r)}
 \le C\left(\rho\|D^2u\|_{L^p(B_r)}
 +\rho^{-1}\|u\|_{L^p(B_r)}\right),
 \qquad 0<\rho\le r,
\end{equation}
which follows by scaling and a bounded extension operator for a ball.

\begin{lemma}[Hole-filling]\label{lem:hole}
Let $\Phi:[R,2R]\to[0,\infty)$ be bounded and suppose that for all
$R\le t<r\le2R$,
\[
 \Phi(t)\le\theta\Phi(r)+A+B(r-t)^{-2},
 \qquad 0<\theta<1.
\]
Then $\Phi(R)\le C_\theta(A+BR^{-2})$.
\end{lemma}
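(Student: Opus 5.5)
This is the standard iteration lemma (Giaquinta's lemma), and the plan is to iterate the inequality along a geometric sequence of radii chosen so that the factor $\theta^k$ beats the growth of $(r-t)^{-2}$. Fix $\tau\in(0,1)$, to be chosen depending only on $\theta$, and define
\[
 t_0=R,\qquad t_{k+1}=t_k+(1-\tau)\tau^kR,\qquad k\ge0.
\]
Then $t_k\uparrow R+(1-\tau)R\sum_{k\ge0}\tau^k=2R$, so every $t_k$ lies in $[R,2R)$ and $t_{k+1}-t_k=(1-\tau)\tau^kR$.

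Applying the hypothesis with $t=t_k$ and $r=t_{k+1}$ gives
\[
 \Phi(t_k)\le\theta\Phi(t_{k+1})+A+B(1-\tau)^{-2}\tau^{-2k}R^{-2}.
\]
By induction on $k$ we obtain, for every $N\ge1$,
\[
 \Phi(R)\le\theta^N\Phi(t_N)
 +\sum_{k=0}^{N-1}\theta^k\Bigl(A+B(1-\tau)^{-2}R^{-2}\tau^{-2k}\Bigr).
\]

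We now choose $\tau$ so that $\theta\tau^{-2}<1$, for instance $\tau^2=(1+\theta)/2$, which lies strictly between $\theta$ and $1$. With this choice both geometric series $\sum\theta^k$ and $\sum(\theta\tau^{-2})^k$ converge, with sums depending only on $\theta$.

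The step requiring care is the remainder term $\theta^N\Phi(t_N)$. Here the boundedness assumption on $\Phi$ is exactly what is needed: since $\sup_{[R,2R]}\Phi<\infty$ and $\theta<1$, this term tends to $0$ as $N\to\infty$. The hypothesis is used only with $t<r$, and indeed $t_k<t_{k+1}\le2R$, so no endpoint issue arises. Letting $N\to\infty$ yields
\[
 \Phi(R)\le\frac{A}{1-\theta}+\frac{B(1-\tau)^{-2}R^{-2}}{1-\theta\tau^{-2}}
 \le C_\theta\bigl(A+BR^{-2}\bigr),
\]
with $C_\theta$ depending only on $\theta$, as claimed.
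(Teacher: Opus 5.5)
Your proof is correct and follows essentially the same route as the paper. You use the same geometric radii $t_k=R+R(1-\tau^k)$ with $\tau\in(\sqrt\theta,1)$ (your choice $\tau^2=(1+\theta)/2$ is one such), the same $N$-fold iteration summing the series $\sum\theta^k$ and $\sum(\theta\tau^{-2})^k$, and the same use of the boundedness of $\Phi$ to make $\theta^N\Phi(t_N)\to0$.
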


\begin{proof}
Choose $\tau\in(\sqrt\theta,1)$ and set
$r_k=R+R(1-\tau^k)$.  Then $r_0=R$, $r_k\uparrow2R$, and
$r_{k+1}-r_k=R(1-\tau)\tau^k$.  Iterating the hypothesis $m$ times gives
\[
 \Phi(R)\le\theta^m\Phi(r_m)
 +A\sum_{k=0}^{m-1}\theta^k
 +BR^{-2}(1-\tau)^{-2}
 \sum_{k=0}^{m-1}(\theta\tau^{-2})^k.
\]
Since $\theta\tau^{-2}<1$ and $\Phi$ is bounded, letting $m\to\infty$ proves the claim.
\end{proof}

\begin{proof}[Proof of Theorem~\ref{thm:local}]
Let $R\le t<r\le2R$.  Choose
$\eta\in C_c^\infty(B_r)$ such that
\[
 \eta=1\text{ on }B_t,
 \qquad |D\eta|\le C(r-t)^{-1},
 \qquad |D^2\eta|\le C(r-t)^{-2}.
\]
In particular, the cutoff is never required to equal one up to the boundary of its support.  Put $v=\eta u$.  The assumptions on $B_{2R}$ imply those of Corollary~\ref{cor:compact-full} on $B_r$, after a harmless reduction of $\delta$.  Hence
\begin{equation}\label{eq:start-local}
 \|D^2u\|_{L^p(B_t)}
 \le\|D^2v\|_{L^p}
 \le C\|Lv\|_{L^p(B_r)}.
\end{equation}
The complete product formula is
\begin{equation}\label{eq:product}
 Lv=\eta f
 +2a^{ij}D_i\eta D_j u
 +u\,a^{ij}D_{ij}\eta
 +u\,b^iD_i\eta.
\end{equation}
The gradient term in the ensuing estimate comes from this product rule, not from the constant-coefficient Calder\'on--Zygmund estimate.

Set
\[
 \beta=(2R)^{1-n/q}\|b\|_{L^q(B_{2R})}.
\]
By H\"older's inequality and \eqref{eq:noncompact-sob},
\begin{align}
 \|ub\cdot D\eta\|_{L^p(B_r)}
 &\le C(r-t)^{-1}\|b\|_{L^q(B_{2R})}\|u\|_{L^s(B_r)}\notag\\
 &\le C\beta(r-t)^{-1}\|Du\|_{L^p(B_r)}
 +C\beta R^{-1}(r-t)^{-1}\|u\|_{L^p(B_r)}.
 \label{eq:driftcutoff}
\end{align}
Since $r-t\le R$, the last term is at most
$C\beta(r-t)^{-2}\|u\|_{L^p(B_r)}$.  From
\eqref{eq:start-local}--\eqref{eq:driftcutoff} we obtain
\begin{align}
 \|D^2u\|_{L^p(B_t)}
 &\le C\|f\|_{L^p(B_{2R})}
 +C(1+\beta)(r-t)^{-1}\|Du\|_{L^p(B_r)}\notag\\
 &\quad+C(1+\beta)(r-t)^{-2}\|u\|_{L^p(B_{2R})}.
 \label{eq:beforeinterp}
\end{align}
Apply \eqref{eq:interpolation} with
\[
 \rho=\frac{\varepsilon(r-t)}{1+\beta},
 \qquad 0<\varepsilon\le1.
\]
This choice is admissible because $r-t\le r$.  More explicitly, the gradient
term in \eqref{eq:beforeinterp} satisfies
\begin{align*}
 C(1+\beta)(r-t)^{-1}\|Du\|_{L^p(B_r)}
 &\le C(1+\beta)(r-t)^{-1}\rho
       \|D^2u\|_{L^p(B_r)}\\
 &\quad+C(1+\beta)(r-t)^{-1}\rho^{-1}
       \|u\|_{L^p(B_r)}\\
 &=C\varepsilon\|D^2u\|_{L^p(B_r)}\\
 &\quad+C\varepsilon^{-1}(1+\beta)^2(r-t)^{-2}
       \|u\|_{L^p(B_r)}.
\end{align*}
Since $\beta\le\delta$, we may decrease $\delta$ so that $\beta\le1$; the
coefficient of the last term is then bounded independently of $r,t$ and
$R$.  Choose $\varepsilon$ so small that $\theta:=C\varepsilon<1$, and set
$\Phi(s):=\|D^2u\|_{L^p(B_s)}$.  Hence
\[
 \Phi(t)\le\theta\Phi(r)
 +C\|f\|_{L^p(B_{2R})}
 +C(r-t)^{-2}\|u\|_{L^p(B_{2R})}.
\]
Lemma~\ref{lem:hole} now yields \eqref{eq:localestimate}.
\end{proof}

\begin{remark}[Uniform absorption on all smaller balls]\label{rem:uniform-absorption}
The absorption above is uniform with respect to the radius.  More precisely,
assume that for some $R_0>0$
\[
 [A]_{\BMO(B_{2R_0}(x_0))}\le\delta,
 \qquad
 (2R_0)^{1-n/q}\|b\|_{L^q(B_{2R_0}(x_0))}\le\delta.
\]
Then for every $0<R\le R_0$, nestedness of the balls and positivity of
$1-n/q$ give
\[
 [A]_{\BMO(B_{2R}(x_0))}
 \le [A]_{\BMO(B_{2R_0}(x_0))}\le\delta
\]
and
\[
 (2R)^{1-n/q}\|b\|_{L^q(B_{2R}(x_0))}
 \le (2R_0)^{1-n/q}\|b\|_{L^q(B_{2R_0}(x_0))}\le\delta.
\]
Consequently the constants in Corollary~\ref{cor:compact-full}, the
interpolation step and the hole-filling argument are independent of
$R\in(0,R_0]$.  In particular, no condition involving a negative power of
$R$ is used.
\end{remark}

\section{Interior estimates and the non-circular bootstrap}\label{sec:bootstrap}
\begin{theorem}[Interior estimate]\label{thm:interior}
Let $1<p<\infty$, choose $q>\max\{n,p\}$, and let
$\Omega'\Subset\Omega''\Subset\Omega$.  Assume \eqref{eq:ellipticity},
$A\in\VMO_{\rm loc}(\Omega)$ and $b\in L^q_{\rm loc}(\Omega)$.  If
$u\in W^{2,p}_{\rm loc}(\Omega)$ is a strong solution of \eqref{eq:main}, with $f\in L^p_{\rm loc}(\Omega)$, then
\begin{equation}\label{eq:interior}
 \|D^2u\|_{L^p(\Omega')}
 \le C\left(
 \|f\|_{L^p(\Omega'')}+\|u\|_{L^p(\Omega'')}
 \right).
\end{equation}
The constant depends on $n,p,q,\lambda,\Lambda$,
$\dist(\Omega',\partial\Omega'')$, the VMO modulus of $A$ on $\Omega''$, and $\|b\|_{L^q(\Omega'')}$.
\end{theorem}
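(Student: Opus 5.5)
The plan is to derive the interior estimate from Theorem~\ref{thm:local} by a covering argument, applied on balls whose radius is fixed by the data.

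\emph{Choice of the radius.} Let $d=\dist(\Omega',\partial\Omega'')>0$ and let $\delta=\delta(n,p,q,\lambda,\Lambda)$ be the threshold of Theorem~\ref{thm:local}. Choose $R_0\in(0,d/4]$ so that for every $x_0\in\overline{\Omega'}$:
\begin{enumerate}[label=\textup{(\alph*)}]
\item $[A]_{\BMO(B_{2R_0}(x_0))}\le\delta$. This holds because every ball $B'\subset B_{2R_0}(x_0)$ has radius at most $2R_0$, its centre lies in a fixed compact neighbourhood $U$ of $\overline{\Omega'}$ inside $\Omega''$, and $B'\Subset\Omega$. Hence the seminorm is bounded by $\omega_A(2R_0;U)$, which tends to $0$ because $A\in\VMO_{\rm loc}$.
\item $(2R_0)^{1-n/q}\|b\|_{L^q(B_{2R_0}(x_0))}\le\delta$. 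This holds as soon as $(2R_0)^{1-n/q}\|b\|_{L^q(\Omega'')}\le\delta$, which is possible since $q>n$.
\end{enumerate}
Thus $R_0$ depends only on $d$, the VMO modulus of $A$ on $\Omega''$, and $\|b\|_{L^q(\Omega'')}$, which is exactly the dependence allowed for the constant.

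\emph{Local estimate at each point.} Since $u\in W^{2,p}_{\rm loc}(\Omega)$, we have $u\in W^{2,p}(B_{2R_0}(x_0))$ and $B_{2R_0}(x_0)\Subset\Omega''$. Theorem~\ref{thm:local} then gives
\[
 \|D^2u\|_{L^p(B_{R_0}(x_0))}\le C\bigl(\|f\|_{L^p(B_{2R_0}(x_0))}+R_0^{-2}\|u\|_{L^p(B_{2R_0}(x_0))}\bigr).
\]

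\emph{Covering.} Cover $\Omega'$ by balls $B_{R_0}(x_k)$ with centres $x_k$ in $\Omega'$, chosen so that the doubled balls $B_{2R_0}(x_k)$ have bounded overlap $N=N(n)$. This is possible, for instance, by taking the $x_k$ to form a maximal $R_0$-separated subset of $\Omega'$; a volume-counting argument then bounds the overlap. Raise the local estimates to the power $p$ and sum over $k$. Bounded overlap gives
\[
 \|D^2u\|_{L^p(\Omega')}^p\le C N\bigl(\|f\|_{L^p(\Omega'')}^p+R_0^{-2p}\|u\|_{L^p(\Omega'')}^p\bigr).
\]
Taking $p$-th roots yields \eqref{eq:interior}, with $R_0^{-2}$ absorbed into $C$.

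\emph{Where the difficulty lies.} The only delicate point is step~(a): one must check that the BMO seminorm on $B_{2R_0}(x_0)$ is really controlled by the modulus $\omega_A$ in \eqref{eq:omegaA}. That modulus takes the supremum only over balls centred in a compact set $U$ with $B_\rho\Subset\Omega$. I would settle this by choosing $U=\{x:\dist(x,\Omega')\le d/2\}\subset\Omega''$, which handles all subballs once $2R_0\le d/2$. No circularity arises here, since $W^{2,p}_{\rm loc}$ regularity is part of the hypotheses of Theorem~\ref{thm:interior}.
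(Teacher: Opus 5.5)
Your proof is correct and follows essentially the same route as the paper: choose $R_0$, at most a fixed multiple of $\dist(\Omega',\partial\Omega'')$, so that both smallness conditions of Theorem~\ref{thm:local} hold on every $B_{2R_0}(x)$ with $x\in\Omega'$; apply that theorem on a boundedly overlapping cover by balls $B_{R_0}(x_j)$; and sum. Your drift bound $(2R_0)^{1-n/q}\|b\|_{L^q(\Omega'')}\le\delta$ is slightly cleaner than the paper's appeal to absolute continuity of the integral, because it makes the stated dependence of the constant on $\|b\|_{L^q(\Omega'')}$ explicit.
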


\begin{proof}
Choose $R_0>0$ so small that the two conditions in \eqref{eq:smallness} hold on every ball $B_{2R_0}(x)\Subset\Omega''$ with $x\in\Omega'$.  For $A$ this follows from VMO.  For $b$ it follows from $q>n$ and the absolute continuity of the $L^q$ integral.  By Remark~\ref{rem:uniform-absorption}, for every $x\in\Omega'$ and every $0<R\le R_0$ one then has
\[
 [A]_{\BMO(B_{2R}(x))}\le\delta,
 \qquad
 (2R)^{1-n/q}\|b\|_{L^q(B_{2R}(x))}\le\delta.
\]
Thus the absorption is valid uniformly on all smaller balls, with constants independent of $R$.  Also take $R_0$ smaller than a fixed multiple of
$\dist(\Omega',\partial\Omega'')$.  Cover $\Omega'$ by finitely many balls $B_{R_0}(x_j)$ with bounded overlap and apply Theorem~\ref{thm:local} directly to $u$ on $B_{2R_0}(x_j)$.  No partition of unity is inserted into the equation, so no coefficient term is lost.
\end{proof}

We next convert the a priori estimate into a genuine gain of regularity.

\begin{lemma}[Regularity lifting for the principal operator]\label{lem:lifting}
Let $1<r_0<r<\infty$, let $B$ be a ball, and assume that
$[A]_{\BMO(B)}$ is below the extension and resolvent thresholds corresponding to both $r_0$ and $r$.  Suppose
$v\in W^{2,r_0}(\R^n)$, $\supp v\Subset B$, and
\begin{equation}\label{eq:liftinghyp}
 v\in L^r(\R^n),
 \qquad A:D^2v\in L^r(B).
\end{equation}
Then $v\in W^{2,r}(\R^n)$ and
\begin{equation}\label{eq:liftingest}
 \|v\|_{W^{2,r}(\R^n)}
 \le C\left(\|A:D^2v\|_{L^r(B)}+\|v\|_{L^r}\right).
\end{equation}
\end{lemma}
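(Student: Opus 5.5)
We extend $A$ to $\widetilde A$ on $\R^n$ by Lemma~\ref{lem:extension}. Then $[\widetilde A]_{\BMO(\R^n)}\le C_n[A]_{\BMO(B)}$ lies below the resolvent threshold of Theorem~\ref{thm:resolvent} for both exponents $r_0$ and $r$. Fix $\sigma=1$ and set
\[
 F:=\widetilde A:D^2v-v .
\]
Since $\supp v\Subset B$ and $\widetilde A=A$ a.e.\ on $B$, we have $F=A:D^2v-v$ on $B$ and $F=0$ off $\supp v$. The hypotheses \eqref{eq:liftinghyp} then give $F\in L^r(\R^n)$. Moreover $v\in W^{2,r_0}$, so $F\in L^{r_0}(\R^n)$. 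Thus $F\in L^{r_0}\cap L^r$, and $\|F\|_{L^r}\le\|A:D^2v\|_{L^r(B)}+\|v\|_{L^r}$.

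Next we produce two resolvent solutions. By Theorem~\ref{thm:resolvent} with exponent $r$, there is $w\in W^{2,r}(\R^n)$ with $\widetilde A:D^2w-w=F$. Estimate \eqref{eq:parameter} gives
\[
 \|w\|_{W^{2,r}}\le C\|F\|_{L^r}.
\]
With exponent $r_0$, the same theorem gives a unique solution in $W^{2,r_0}$, and $v$ itself is such a solution. Injectivity in $W^{2,r_0}$, which follows from \eqref{eq:parameter}, identifies $v$ with the $L^{r_0}$-resolvent applied to $F$.

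The consistency clause of Theorem~\ref{thm:resolvent} says that the $L^{r_0}$- and $L^r$-resolvents agree on $L^{r_0}\cap L^r$ as distributions. Hence $v=w$, so $v\in W^{2,r}(\R^n)$, and \eqref{eq:liftingest} follows from the bound on $w$ together with the bound on $\|F\|_{L^r}$.

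The main obstacle is ensuring that this consistency applies. The $W^{2,r_0}$-resolvent and the $W^{2,r}$-resolvent must be built for the same operator $\widetilde A:D^2-1$, and the thresholds for both exponents must hold simultaneously. This is why the hypothesis asks that $[A]_{\BMO(B)}$ lie below both the extension and resolvent thresholds for $r_0$ and $r$; the threshold for the auxiliary $(n+1)$-dimensional operator is already built into Theorem~\ref{thm:resolvent}.

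Without consistency the argument would be circular. We would only know that some $W^{2,r}$ function solves the equation, not that it is $v$. Uniqueness in $W^{2,r_0}$ would then require $w\in W^{2,r_0}$, which fails in general for $r>r_0$ on $\R^n$.
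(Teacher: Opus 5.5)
Your proposal is correct and follows the paper's proof essentially step for step: extend $A$ to $\widetilde A$, set $F=\widetilde A:D^2v-v\in L^{r_0}\cap L^r$, identify $v$ with the $W^{2,r_0}$ resolvent solution, and use consistency of the two resolvents to identify $v$ with the $W^{2,r}$ solution, with the estimate coming from \eqref{eq:parameter}. The only cosmetic difference is that you obtain $F\in L^{r_0}$ directly from $v\in W^{2,r_0}$, whereas the paper uses the bounded support of $F$ together with $r>r_0$; both arguments are valid.
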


\begin{proof}
Extend $A$ to $\widetilde A$ by Lemma~\ref{lem:extension}.  Put
$F=\widetilde A:D^2v-v$.  By \eqref{eq:liftinghyp}, $F\in L^r$; since it has bounded support and $r>r_0$, it also belongs to $L^{r_0}$.  The function $v$ is the $W^{2,r_0}$ resolvent solution of
\[
 (\widetilde A:D^2-1)v=F.
\]
Theorem~\ref{thm:resolvent} supplies a $W^{2,r}$ solution for the same datum.  Consistency of the two resolvents identifies that solution with $v$.  Estimate \eqref{eq:liftingest} follows from \eqref{eq:parameter}.
\end{proof}

\begin{corollary}[Finite local bootstrap]\label{cor:bootstrap}
Let $1<p_0<p<\infty$ and choose $q>\max\{n,p\}$.  There exists a finite collection of smallness thresholds, depending only on
$n,p_0,p,q,\lambda,\Lambda$, with the following property.  Let
$B_{2R}\Subset\Omega$, assume that the BMO seminorm of $A$ on $B_{2R}$ is below all these thresholds, and let $b\in L^q(B_{2R})$.  If
$u\in W^{2,p_0}(B_{2R})$ is a strong solution of \eqref{eq:main} and
$f\in L^p(B_{2R})$, then
\[
 u\in W^{2,p}(B_R).
\]
\end{corollary}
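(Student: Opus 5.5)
We prove the corollary by a finite chain of exponents $p_0=r_0<r_1<\dots<r_N=p$, each step gaining a fixed amount of integrability through the Sobolev embedding, and each step applying Lemma~\ref{lem:lifting} to $v=\eta u$ for a cutoff $\eta$. The thresholds required in the statement are those of Lemma~\ref{lem:lifting} (extension and resolvent) for each consecutive pair $(r_k,r_{k+1})$. These are finitely many because $N$ depends only on $n,p_0,p$.

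\textbf{Choice of exponents.} Given $r_k$ with $u\in W^{2,r_k}$ on a ball, Sobolev gives $Du\in L^{r_k^{**}}$ and $u\in L^{\tilde r_k}$. Here $r_k^{**}$ is any finite exponent if $r_k\ge n$, and $r_k^{**}=nr_k/(n-r_k)$ otherwise; $\tilde r_k$ is the corresponding (larger) exponent for $u$. The drift term satisfies $b\cdot Du\in L^{t}$ with $1/t=1/q+1/r_k^{**}$. We set
\[
 r_{k+1}=\min\{p,\,t\},
\]
noting that $\min\{p,t\}\le\tilde r_k$, so $u\in L^{r_{k+1}}$. We must check that $r_{k+1}>r_k$ by a uniform amount. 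The condition $q>n$ gives $1/r_k-1/r_{k+1}\ge\min\{1/n-1/q,\,1/r_k-1/p\}$ while $r_k<p$, and once $r_k\ge n$ we can take $r_{k+1}=p$ directly since $q>p$. So finitely many steps suffice.

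\textbf{One step.} We work on nested balls $B_{2R}=B^{(0)}\supset B^{(1)}\supset\dots\supset B^{(N)}=B_R$, say with radii $2R-kR/N$, and cutoffs $\eta_k\in C_c^\infty(B^{(k)})$ equal to $1$ on $B^{(k+1)}$. Assume $u\in W^{2,r_k}(B^{(k)})$ and put $v=\eta_k u$. Then $v\in W^{2,r_k}(\R^n)$, $\supp v\Subset B_{2R}$, and by the product formula \eqref{eq:product} rearranged for the principal part,
\[
 A:D^2v=\eta_k(f-b\cdot Du)+2a^{ij}D_i\eta_kD_ju+u\,a^{ij}D_{ij}\eta_k .
\]
Each term lies in $L^{r_{k+1}}$:
\begin{itemize}[label={}]
\item $f\in L^p\subset L^{r_{k+1}}$ on the bounded ball;
\item $b\cdot Du\in L^{t}$ by H\"older;
\item $Du\in L^{r_k^{**}}\subset L^{r_{k+1}}$;
\item $u\in L^{\tilde r_k}$.
\end{itemize}
Also $v\in L^{r_{k+1}}$. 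Lemma~\ref{lem:lifting}, applied with $r_0:=r_k$, $r:=r_{k+1}$ and $B=B_{2R}$, gives $v\in W^{2,r_{k+1}}(\R^n)$, hence $u\in W^{2,r_{k+1}}(B^{(k+1)})$. After $N$ steps, $u\in W^{2,p}(B_R)$. Only the principal operator is inverted, so the drift needs only $b\in L^q$ and no smallness, as the statement allows.

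\textbf{Main obstacle.} The delicate point is the bookkeeping of exponents: ensuring that the Sobolev/H\"older gain for the drift term $b\cdot Du$ is strictly positive and bounded below. This is where $q>\max\{n,p\}$ is used, to reach $p$ in a number of steps depending only on $n,p_0,p,q$, together with the borderline case $r_k=n$, handled by taking any large finite exponent below $\infty$. I would first verify the recursion $1/r_{k+1}=\max\{1/p,\,1/r_k-(1/n-1/q)\}$ for $r_k<n$, which gives the explicit bound $N\le 1+\lceil(1/p_0)/(1/n-1/q)\rceil$.
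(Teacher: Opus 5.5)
Your proposal is correct and follows the paper's proof essentially step for step. It uses the same recursion $1/r_{k+1}=\max\{1/p,\,1/r_k-1/n+1/q\}$, jumping directly to $p$ once $r_k\ge n$. It also uses the same nested cutoffs $v_k=\eta_k u$ with the drift moved to the right-hand side, and applies Lemma~\ref{lem:lifting} to each consecutive pair of exponents. One small slip: the number of steps $N$ depends on $q$ as well as on $n,p_0,p$, as your own explicit bound shows, and the statement allows this dependence.
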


\begin{proof}
Set $r_0=p_0$.  While $r_k<n$, define
\begin{equation}\label{eq:recurrence}
 \frac1{r_{k+1}}
 =\max\left\{
 \frac1p,\frac1{r_k}-\frac1n+\frac1q
 \right\}.
\end{equation}
Because $1/n-1/q>0$, the sequence strictly increases and reaches either $p$ or an exponent at least $n$ after finitely many steps.  If $r_k\ge n$, set $r_{k+1}=p$.  Let $N$ be the number of steps and choose nested concentric balls
\[
 B_R=B_{\rho_N}\Subset B_{\rho_{N-1}}\Subset\cdots
 \Subset B_{\rho_0}=B_{2R}.
\]

Assume inductively that $u\in W^{2,r_k}(B_{\rho_k})$.  Choose
$\eta_k\in C_c^\infty(B_{\rho_k})$ with $\eta_k=1$ on
$B_{\rho_{k+1}}$ and put $v_k=\eta_ku$.  Then
\begin{equation}\label{eq:bootstrap-eq}
 A:D^2v_k
 =\eta_k f-\eta_k b\cdot Du
 +2a^{ij}D_i\eta_kD_j u
 +u\,a^{ij}D_{ij}\eta_k.
\end{equation}
Suppose first that $r_k<n$.  Sobolev's theorem gives
$Du\in L^{r_k^*}$, where $1/r_k^*=1/r_k-1/n$.  Hence
\[
 b\cdot Du\in L^t,
 \qquad \frac1t=\frac1q+\frac1{r_k^*}
 =\frac1{r_k}-\frac1n+\frac1q.
\]
By \eqref{eq:recurrence}, $r_{k+1}\le t$ and $r_{k+1}\le p$, so every term on the right of \eqref{eq:bootstrap-eq} belongs to $L^{r_{k+1}}$.  The cutoff term containing $Du$ has even the exponent $r_k^*>t$.  The term containing $u$ is also admissible: if $r_k<n/2$, then the second-order Sobolev exponent satisfies
\[
 \frac1{r_k^{**}}=\frac1{r_k}-\frac2n
 <\frac1{r_{k+1}},
\]
whereas for $r_k\ge n/2$ one has $u\in L^s$ for every finite $s$ (and $u\in L^\infty$ when $r_k>n/2$).

If $r_k=n$, choose $s<\infty$ so that $1/p=1/q+1/s$.  The borderline embedding gives $Du\in L^s$ locally, and hence $b\cdot Du\in L^p$.  If $r_k>n$, Morrey's embedding gives $Du\in L^\infty$, so again $b\cdot Du\in L^p$ because $q>p$.

Thus, in every case,
$A:D^2v_k\in L^{r_{k+1}}$ and $v_k\in L^{r_{k+1}}$.  Lemma~\ref{lem:lifting} gives
$v_k\in W^{2,r_{k+1}}(\R^n)$.  Since $v_k=u$ on
$B_{\rho_{k+1}}$, the induction closes.  After finitely many steps one reaches $r_N=p$.
\end{proof}

\begin{proof}[Proof of Theorem~\ref{thm:VMO-intro}]
If $p\le2$, the conclusion follows locally from the inclusion
$W^{2,2}\subset W^{2,p}$ on sets of finite measure.  Assume $p>2$ and fix
$\Omega'\Subset\Omega''\Subset\Omega$.  Choose $q>\max\{n,p\}$.  By the John--Nirenberg inequality, $b\in L^q(\Omega'')$.  The finite bootstrap uses only finitely many BMO thresholds.  Since $A\in\VMO_{\rm loc}$, there is $R_0>0$ such that all of them hold on every ball of radius at most $2R_0$ contained in $\Omega''$.  Cover $\Omega'$ by finitely many such balls and apply Corollary~\ref{cor:bootstrap} with $p_0=2$.  This gives $u\in W^{2,p}(\Omega')$.  Since $\Omega'$ was arbitrary, the theorem follows.
\end{proof}

\begin{remark}
The assumption $b\in\VMO_{\rm loc}$ in Theorem~\ref{thm:VMO-intro} can be weakened to
$b\in L^q_{\rm loc}$ for one exponent $q>\max\{n,p\}$.  The VMO formulation is retained because it is closest to the original equation and automatically supplies every finite local $L^q$ exponent.
\end{remark}

\end{document}